\numberwithin{equation}{section}
\newtheorem{thm}{Theorem}[section]
\newtheorem*{thm*}{Theorem}
\newtheorem*{cor*}{Corollary}
\newtheorem*{conj*}{Conjecture}
\newtheorem{prop}[thm]{Proposition}
\newtheorem{lem}[thm]{Lemma}
\theoremstyle{definition}
\newtheorem{dfn}[thm]{Definition}
\newtheorem{rmk}[thm]{Remark}
\newtheorem{exa}[thm]{Example}
\newcommand{\bbC}{{\mathbb C}}
\newcommand{\bbP}{{\mathbb P}}
\newcommand{\bbS}{{\mathbb S}}
\newcommand{\bbZ}{{\mathbb Z}}
\newcommand{\cA}{{\mathcal A}}
\newcommand{\cB}{{\mathcal B}}
\newcommand{\cD}{{\mathcal D}^b}
\newcommand{\cE}{{\mathcal E}}
\newcommand{\cH}{{\mathcal H}}
\newcommand{\cI}{{\mathcal I}}
\newcommand{\cM}{{\mathcal M}}
\newcommand{\cN}{{\mathcal N}}
\newcommand{\cO}{{\mathcal O}}
\newcommand{\cP}{{\mathcal P}}
\newcommand{\cQ}{{\mathcal Q}}
\newcommand{\cS}{{\mathcal S}}
\newcommand{\cT}{{\mathcal T}}
\newcommand{\cY}{{\mathcal Y}}
\newcommand{\hra}{\hookrightarrow}
\newcommand{\eps}{\varepsilon}
\newcommand{\Hom}{\mathrm{Hom}}
\newcommand{\Ext}{\mathrm{Ext}}
\newcommand{\Spec}{\mathrm{Spec}}
\newcommand{\rk}{\mathrm{rk}}
\newcommand{\id}{\mathrm{id}}
\newcommand{\ch}{\mathrm{ch}}
\newcommand{\Aut}{\mathrm{Aut}}
\newcommand{\Coh}{\mathrm{Coh}}
\newcommand{\im}{\mathrm{im}}
\newcommand{\cone}{\mathrm{cone}}
\newcommand{\HH}{\mathrm{HH}}
\newcommand{\HT}{\mathrm{HT}}
\renewcommand{\H}{\mathrm{H}}
\newcommand{\at}{\mathrm{At}}
\newcommand{\Xn}{X^{[n+1]}}
\newcommand{\ann}{\mathrm{ann}}
\renewcommand{\epsilon}{\varepsilon}
\renewcommand{\phi}{\varphi}
\DeclareMathOperator{\ob}{\mathsf{ob}}
\title{Hochschild cohomology and deformations of $\bbP$-functors}
\author{Ciaran Meachan}
\address{Ciaran Meachan, School of Mathematics and Statistics, University of Glasgow, Scotland}
\email{ciaran.meachan@glasgow.ac.uk}
\author{Theo Raedschelders}
\thanks{The second author is supported by an EPSRC postdoctoral fellowship EP/R005214/1.}
\address{Theo Raedschelders, School of Mathematics and Statistics,
University of Glasgow, Scotland
}
\email{theo.raedschelders@glasgow.ac.uk}
\begin{document}

\begin{abstract}
Given a split $\bbP$-functor $F:\cD(X) \to \cD(Y)$ between smooth projective varieties, we provide necessary and sufficient conditions, in terms of the Hochschild cohomology of $X$, for it to become spherical on the total space of a deformation of $Y$, and explain how the spherical twist becomes the $\bbP$-twist on the special fibre. These results generalise the object case, that is when $X$ is a point, which was studied previously by Huybrechts and Thomas, and we show how they apply to the $\bbP$-functor associated to the Hilbert scheme of points on a K3 surface. In the appendix we review and reorganise some technical results due to Toda, relating to the interaction of Atiyah classes, the HKR-isomorphism, and the characteristic morphism.
\end{abstract}

\maketitle

\tableofcontents

\section{Introduction}

If $\cD(Y)$ denotes the bounded derived category of coherent sheaves on a smooth complex projective variety $Y$ with trivial canonical bundle, then it is an enticing and very ambitious task to compute the group $\Aut(\cD(Y))$ of isomorphism classes of complex linear exact autoequivalences of $\cD(Y)$. The cases when $Y$ is a Calabi--Yau or hyperk\"ahler variety are of particular interest. Indeed, these two cases are governed by the complex $n$-sphere $\bbS^n$ and complex projective space $\bbP^n$, respectively, in the sense that the cohomology of the structure sheaf is isomorphic (as a ring) to the singular cohomology of $\bbS^n$ and $\bbP^n$ and mirror symmetry predicts the existence (in some appropriate limit) of Lagrangian fibrations over $\bbS^n$ and $\bbP^n$; see \cite{strominger1996mirror} for more details or \cite{gross2012mirror} for a survey.\footnote{We have also found \cite{verbitsky2010hyperkahler} to be particularly enlightening.}

The notion of a spherical object was introduced in the seminal paper of Seidel and Thomas \cite{seidel2001braid} and the resulting autoequivalences, called spherical twists, were studied in great detail. Huybrechts and Thomas \cite{huybrechts2006pobjects} completed the picture by introducing the notion of a $\bbP$-object and their induced autoequivalences, called $\bbP$-twists. In particular, they observed that a $\bbP$-object $\cE\in\cD(Y)$ which does not deform sideways in a one-dimensional family $\cY\to C$ over a smooth curve $C$ becomes a spherical object on the ambient space. Moreover, in such a situation, the spherical twist becomes the $\bbP$-twist on the special fibre $j:Y\hra\cY$.

Spherical and $\bbP^n$-objects were later generalised to spherical and $\bbP^n$-functors $F:\cA \to \cB$ between enhanced triangulated categories \cite{MR3692883,pfunctor}. Besides recovering the previous notions (by setting $\cA=\cD(\Spec(\bbC))$, and $\cB=\cD(Y)$), they unify various family versions of spherical and $\bbP^n$-objects and give rise to new derived autoequivalences, not coming from spherical or $\bbP^n$-objects. For a more detailed history, including ample references, see \cite{addington2011new}.

The aim of this note is to generalise the result of Huybrechts and Thomas to the setting of $\bbP$-functors. In doing so, one is faced with several choices, both in the methods used and in the level of abstraction. Since we want to highlight the analogy with the approach of Huybrechts and Thomas, we will stick to the setting of Fourier--Mukai functors between smooth complex projective varieties, instead of working with arbitrary enhanced triangulated categories. This has the advantage that the results are immediately applicable to geometric examples. However, to make up for this restriction, we have tried to emphasise throughout the abstract nature of our proofs, which are based on deformation theory and Hochschild cohomology. We are hopeful that it is possible to generalise our results to the setting of non-split $\bbP$-functors between enhanced triangulated categories \cite{pfunctor}, but have shied away from doing so in order to avoid some technical difficulties (see \S\ref{sec:con} for a brief discussion). Our main result, which we paraphrase here, is the following:

\begin{thm}[See Theorem \ref{thm:pbecomess} and Theorem \ref{th:main2}]
\label{thm:intro}
Let $F=\Phi_{\cP}:\cD(X) \to \cD(Y)$ be a split $\bbP^n$-functor between smooth projective varieties, and $j:Y \hookrightarrow \cY$ a one-parameter deformation of $Y$. Assume furthermore that $\HH^{2n+2}(X)=0$. Then the functor 
\begin{equation}
j_*F:\cD(X) \to \cD(\cY)
\end{equation}
is spherical if and only if as one deforms $Y$ to first order (through $\cY$), there is no generalised first order deformation of $X$ such that the functor $F$ deforms.

Moreover, in this case, the $\bbP$-twist $P_F$ and spherical twist $T_{j_*F}$ fit into a 2-commutative diagram: 
\begin{equation}
\begin{tikzcd}
\cD(Y)\ar[r,"j_*"]\ar[d,"P_F"'] & \cD(\cY)\ar[d,"T_{j_*F}"]\\ 
\cD(Y)\ar[r,"j_*"] & \cD(\cY).
\end{tikzcd}
\end{equation}
\end{thm}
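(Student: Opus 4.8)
My plan is to verify that $j_*F$ satisfies the axioms of a spherical functor in the sense of Anno--Logvinenko, and to read off both the criterion and the comparison of twists from the resulting cone computations, following the blueprint of Huybrechts--Thomas in the object case. Write $R$ (resp.\ $L$) for the right (resp.\ left) adjoint of $F$; then $R\,j^!$ and $L\,j^*$ are the adjoints of $j_*F$, and the cotwist to analyse is $C:=\cone\big(\id_{\cD(X)}\xrightarrow{\eta}R\,j^!j_*\,F\big)[-1]$. Two structural inputs feed the computation. First, since $j\colon Y\hra\cY$ is the special fibre of a one-parameter family it is a divisor with trivial normal bundle, so $j^!j_*$ is built from $\id$ and $\id[-1]$, with the higher datum measuring its failure to split formally being the Kodaira--Spencer class $\kappa\in\H^1(Y,T_Y)\subseteq\HH^2(Y)$ of the deformation. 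Second, the split $\bbP^n$-structure provides an isomorphism $RF\cong\bigoplus_{i=0}^n D^{i}$ onto powers of the associated autoequivalence $D$ of $\cD(X)$, together with the monad multiplication generating this decomposition.

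I would then expand $R\,j^!j_*\,F$ through these two inputs and isolate the single obstruction to $C$ being invertible. Feeding $RF\cong\bigoplus_i D^i$ into the $j^!j_*$-sandwich, the comparison map whose cone computes $C$ reduces, after cancelling the tautological summand hit by the unit $\eta$, to a map built from $\kappa$ acting on the graded pieces $D^i$. The crucial identification --- and this is where the deformation-theoretic input of the appendix enters --- is that, via the Atiyah class of $\cP$, the HKR isomorphism and the characteristic morphism, the action of $\kappa$ on the monad $\bigoplus_i D^i$ coincides up to a scalar with its multiplication precisely when the obstruction class $\mu_{\cP}(\kappa_X\boxplus\kappa)\in\Ext^2(\cP,\cP)$ to deforming the Fourier--Mukai kernel is nonzero for every generalised first-order deformation $\kappa_X\in\HH^2(X)$ of $X$; this is exactly the failure of $F$ to deform as $Y$ deforms through $\cY$. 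Under this hypothesis the $\kappa$-multiplication is an isomorphism $D^{i}\xrightarrow{\sim}D^{i+1}$ for $0\le i<n$, while the vanishing $\HH^{2n+2}(X)=0$ (automatic for $X$ a point, recovering Huybrechts--Thomas) ensures that nothing spurious survives at the top graded level; consequently the cone collapses onto a single autoequivalence and $C$ is invertible. Conversely, a deformation of $F$ produces a nonzero kernel for the $\kappa$-multiplication, hence surplus cohomology obstructing invertibility of $C$. The second Anno--Logvinenko axiom, comparing $R\,j^!$ and $L\,j^*$ through $C$, I expect to follow formally from Grothendieck--Serre duality for $j$ together with the defining $\bbP^n$-relations.

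For the $2$-commutative square I would compare the two twist functors after precomposition with $j_*$. The spherical twist is the cone of the counit $(j_*F)(R\,j^!)\to\id_{\cD(\cY)}$, so $T_{j_*F}\,j_*$ is the cone of $j_*\,F R\,j^!j_*\to j_*$; substituting the structure of $j^!j_*$ presents this as an iterated cone whose constituents are $j_*$, a copy of $j_*(FR)$, and a shifted copy of $j_*(FR)$, the two connecting maps being governed by the very $\kappa$-multiplication analysed above. This matches, term by term, the image under $j_*$ of the standard double-cone defining the $\bbP$-twist $P_F$, in which the inner differential is the monad multiplication and the outer map is the counit $FR\to\id$. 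Matching the differentials then yields a natural isomorphism $T_{j_*F}\,j_*\cong j_*\,P_F$, which is the asserted diagram; to obtain an identification of \emph{functors} I would carry the comparison out at the level of Fourier--Mukai kernels.

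The step I expect to be hardest is the identification in the second paragraph: showing, functorially rather than merely on cohomology, that the action of the Kodaira--Spencer class $\kappa$ agrees up to a nonzero scalar with the monad multiplication of the split $\bbP^n$-functor, and that its degeneracy is measured exactly by the deformation obstruction $\mu_{\cP}(\kappa_X\boxplus\kappa)$ of the kernel $\cP$. This is precisely what the careful bookkeeping of Atiyah classes, the HKR isomorphism and the characteristic morphism collected in the appendix is designed to supply, with $\HH^{2n+2}(X)=0$ isolating the top graded piece. Promoting these objectwise cone identifications to honest isomorphisms of Fourier--Mukai kernels --- so that the square genuinely $2$-commutes and $C$ is an equivalence of functors rather than a degreewise isomorphism --- is the delicate technical heart of the argument.
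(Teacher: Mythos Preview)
Your outline is essentially the paper's argument. The paper works with the triangle $\cP[1]\to i^*i_*\cP\to\cP\xrightarrow{\ob(\cP)}\cP[2]$ coming from the divisorial embedding (your $j^!j_*$-structure), applies $\cP_R\ast-$, and uses the split decomposition \eqref{eq:gammiso}; its Lemma~\ref{prop:notinimage} is exactly your ``crucial identification'', proved by mapping $I_Y(\kappa)\ast\cP$ under adjunction to a pair $(v_1,v_2)\in\HH^2(X)\oplus\HH^0(X)$ and observing that the induced map on cohomology sheaves is an isomorphism iff $v_2\neq0$ iff $\ob_\cP(u,I_Y(\kappa))\neq0$ for all $u\in\HH^2(X)$. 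For the intertwining diagram the paper likewise follows the Huybrechts--Thomas blueprint, replacing $\cE^\vee\boxtimes\cE$ by $\cP\ast\cP_R$, invoking the Anno--Logvinenko uniqueness-of-convolution result (Proposition~\ref{prop:Ptwist}), and checking $f=(1_{\cP\ast\cP_R}\otimes\kappa(\cY\times_C\cY))\circ\at(\cP\ast\cP_R)$ via the appendix.

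Two points where your phrasing is imprecise and would need sharpening. First, the $\kappa$-action on $\bigoplus_i\cO_\Delta[-2i]$ does \emph{not} in general coincide with a scalar multiple of the monad multiplication: it has a diagonal component $v_1\in\HH^2(X)$ and a subdiagonal component $v_2\in\HH^0(X)$, and only $v_2$ survives on cohomology sheaves; the correct dichotomy is $v_2\neq0$ versus $v_2=0$, and the latter is equivalent to $I_Y(\kappa)\ast\cP$ lying in the image of $\cP\ast-$. Second, the hypothesis $\HH^{2n+2}(X)=0$ is not used ``at the top graded level'' of the monad; once one knows $\cP_R\ast i^*i_*\cP$ has cohomology $\cO_\Delta$ concentrated in degrees $0$ and $2n+1$, the hypothesis is exactly the vanishing of $\Ext^{2n+2}_{X\times X}(\cO_\Delta,\cO_\Delta)$ needed to invoke the formality criterion (Lemma~\ref{lem:formal}) and conclude $\cP_R\ast i^*i_*\cP\simeq\cO_\Delta\oplus\cO_\Delta[-2n-1]$.
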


The condition for the composition $j_*F$ to become spherical can be quantified in terms of explicit obstruction classes involving the second Hochschild cohomology group of $X$, which we discuss in \S2 (see Definition \ref{def:obsuv}). A rough slogan which was expounded in \cite{huybrechts2006pobjects} is that $\bbP^n$-objects should be thought of as hyperplane sections of spherical objects, and Theorem \ref{thm:intro} shows that a similar philosophy holds true in the context of $\bbP^n$-functors. Setting $X=\Spec(\bbC)$ in Theorem \ref{thm:intro} recovers the results of Huybrechts and Thomas. 

By now there are quite a few papers devoted to examples of $\bbP^n$-functors (for a non-exhaustive list, consult \cite{addington2011new,addington2015moduli,addington2015twists,pfunctor,cautis2012flops,krug2014nakajima,krug2017universal,markman2011integral,meachan2012derived}), so there are plenty of functors Theorem \ref{thm:intro} potentially applies to. Many of these functors are associated to  moduli problems of some kind, and indeed, the quintessential example of a $\bbP^n$-functor, which motivated its very definition, is the Fourier--Mukai functor 
\begin{equation}
\label{eq:quint}
F:=\Phi_{\cP}:\cD(X) \to \cD(\Xn),
\end{equation} 
where $X$ is a smooth projective K3 surface, $\Xn$ is the Hilbert scheme of $n+1$ points on $X$, and $\cP$ is the ideal sheaf of the universal closed subscheme of $X \times \Xn$. This functor was shown to be a $\bbP^n$-functor in \cite{addington2011new,markman2011integral}. In \S\ref{sec:hilb}, we use deep results by Markman \cite{markman2010integral,markman2011beauville} and Markman--Mehrotra \cite{markman2011integral} to show that Theorem \ref{thm:intro} applies to the functor $F$, see Theorem \ref{thm:hilb} for a precise statement. Of course, we expect that the theorem applies to other interesting $\bbP^n$-functors as well.

Finally, let us say a few words about the proof. It is perhaps  surprising that in Theorem \ref{thm:intro}, even though we only consider an honest geometric deformation $\cY$ of $Y$, the condition for $j_*F$ to become spherical involves generalised deformations of $X$, which one can think of as describing not only the geometric, but also the noncommutative and gerby deformations.\footnote{This can be made precise through the Hochschild--Kostant--Rosenberg isomorphism, see \S\ref{sec:hhdefs}.} The underlying reason for this is the following: even though Hochschild cohomology is not functorial, any Fourier--Mukai functor induces a type of correspondence \eqref{eq:corr} on Hochschild cohomology groups. After applying HKR to both sides, this correspondence allows one to compare the commutative, noncommutative, and gerby deformations of $X$ and $Y$, but there is no a priori reason why the different types of deformations should line up, and indeed they typically do not.\footnote{In the single object case, where $X=\Spec(\bbC)$ is a point, this phenomenon is not visible since a point has no non-trivial deformations, commutative or otherwise.} Accepting the fact that one also needs to take into account these generalised deformations of $X$, it turns out to be possible to lift the proof of Huybrechts and Thomas to the functor setting. 

\subsection*{Acknowledgements} 
This paper owes a great deal to the preprint \cite{markman2011integral}, and it can be considered as an extended elaboration on some of the many new, and largely unexplored, ideas contained in the work of Markman and Mehrotra. The first author is very grateful to Daniel Huybrechts and Richard Thomas for encouragement, support and expert advice. Both authors would like to thank Andreas Krug, Wendy Lowen, Emanuele Macr\`i, and Paolo Stellari for helpful comments on a preliminary version of this paper.

\subsection*{Conventions} 
Throughout, we work over the complex numbers $\bbC$. Unless explicitly stated otherwise, all varieties will be smooth and projective over $\bbC$, and we will denote by $\cD(X)$ the bounded derived category of coherent sheaves on $X$. For a Fourier--Mukai functor $F=\Phi_\cP:\cD(X) \to \cD(Y)$ with kernel $\cP \in \cD(X \times Y)$, we will denote the kernel of its left adjoint $L$ by $\cP_L$, and the kernel of its right adjoint $R$ by $\cP_R$. We denote by $\cO_{\Delta}:=\Delta_*\cO_X$, for $\Delta:X \to X \times X$ the diagonal embedding.  All functors are implicitly derived. 

\section{Hochschild cohomology and Fourier--Mukai transforms}
\label{sec:hhfm}
In \cite{toda2009deformations}, Toda describes how deformations of smooth projective varieties interact with Fourier--Mukai equivalences. His techniques actually allow one to understand how arbitrary Fourier--Mukai functors interact with deformations, and this is what we explain in this section. We believe this is well-known to experts, but for lack of a reference we spell it out and refer to Appendix \ref{app} for further details.

Throughout this section, let $F=\Phi_\cP:\cD(X) \to \cD(Y)$ denote a Fourier--Mukai functor with kernel $\cP \in \cD(X \times Y)$. 

\subsection{Hochschild cohomology and deformations}
\label{sec:hhdefs}
Recall that the Hochschild cohomology of $X$ was defined in \cite{MR1390671} as 
\begin{equation}
\label{def-hh}
\HH^{\bullet}(X) :=\Ext^{\bullet}_{X \times X}(\cO_{\Delta},\cO_{\Delta}).
\end{equation}
In \cite{MR2238922}, Lowen and Van den Bergh develop an infinitesimal deformation theory for abelian categories, and in \cite{MR2183254} it is shown that these deformations are controlled by a suitable notion of Hochschild cohomology for abelian categories. They furthermore show \cite[Corollary 7.8.2]{MR2183254} that the Hochschild cohomology of the abelian category $\Coh(X)$ of coherent sheaves on the smooth projective variety $X$ agrees with \eqref{def-hh}, and obtain the following theorem, which we paraphrase since we won't need the details.

\begin{thm}\cite[Theorem 3.1]{MR2183254}
\label{th:lvdb}
The infinitesimal deformations of the abelian category $\Coh(X)$ are classified by $\HH^2(X)$.
\end{thm}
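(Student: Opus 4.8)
The plan is to reduce the deformation problem for the abelian category $\Coh(X)$ to a purely algebraic deformation problem for a small linear category attached to $X$, and then to match the controlling cohomology with the Hochschild cohomology defined in \eqref{def-hh}. First I would replace $\Coh(X)$ by the Grothendieck category $\QCoh(X)$, since $\Coh(X)$ is recovered as its subcategory of noetherian objects and the two have compatible first-order deformation theories; working with a Grothendieck category is convenient because it admits generators and the machinery of Lowen--Van den Bergh applies directly. The relevant notion here is that an infinitesimal deformation should be a $\bbC[\epsilon]/(\epsilon^2)$-linear abelian category, flat over $\bbC[\epsilon]/(\epsilon^2)$, whose reduction modulo $\epsilon$ recovers $\QCoh(X)$.

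Next I would present $\QCoh(X)$ as a module category over a small $\bbC$-linear category $\mathfrak{u}$. Choosing a finite affine cover $\{U_i\}$ of $X$, one forms the linear category whose objects are the opens $U_i$ (and their finite intersections) with morphism spaces assembled from the sections of $\cO_X$ together with the restriction maps; restriction of sheaves then yields an equivalence between $\QCoh(X)$ and an appropriate category of modules over $\mathfrak{u}$. Under this dictionary, a flat deformation of the abelian category should correspond to a flat deformation of the linear category $\mathfrak{u}$, that is, to a $\bbC[\epsilon]/(\epsilon^2)$-linear deformation of its composition law. This is the technical core of the theory: one must verify that the $2$-functor sending a linear category to its module category identifies the two deformation problems, which requires tracking flatness and the behaviour of the chosen generators.

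Deformations of a linear category are classified by its Hochschild cohomology in the classical sense, with first-order deformations parametrised by $\HH^2(\mathfrak{u})$ and obstructions living in $\HH^3(\mathfrak{u})$; this is the standard Gerstenhaber-type statement, now phrased for a category rather than a single algebra. The final step is to identify $\HH^2(\mathfrak{u})$ with $\HH^2(X)=\Ext^2_{X\times X}(\cO_{\Delta},\cO_{\Delta})$. Concretely, the Hochschild complex of the presheaf of algebras $\cO_X$ on the cover is the Gerstenhaber--Schack complex, and a local-to-global / \v{C}ech argument computes its cohomology as the global $\Ext$ groups above; this is exactly the content of the comparison result \cite[Corollary 7.8.2]{MR2183254} quoted in the excerpt. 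I would emphasise that it is this identification which later allows the HKR decomposition of $\HH^2(X)$ to be read as separating the commutative, noncommutative, and gerby deformation directions.

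The main obstacle, and the genuinely hard part, is the middle step: proving that deformations of the abelian category and deformations of the linear presentation $\mathfrak{u}$ really do correspond. One has to make precise the correct notion of flatness for an abelian category so that \emph{flat deformation} is well behaved, establish independence of the chosen generators and affine cover, and control the passage between $\Coh$ and $\QCoh$. Granting this equivalence of deformation functors, the classification by $\HH^2(X)$ follows formally from the algebraic case together with the comparison of Hochschild theories.
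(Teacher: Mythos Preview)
The paper does not give its own proof of this statement: it is quoted as Theorem~\ref{th:lvdb} with a bare citation to \cite[Theorem 3.1]{MR2183254}, and no proof environment follows. So there is nothing in the present paper to compare your proposal against.

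That said, your outline is a faithful sketch of the Lowen--Van den Bergh argument in the cited reference: pass from $\Coh(X)$ to the Grothendieck category $\QCoh(X)$, present the latter as modules over a small linear category built from an affine cover, invoke the equivalence between abelian deformations of a module category and linear deformations of the underlying small category, and finally identify the resulting Hochschild cohomology with $\Ext^{\bullet}_{X\times X}(\cO_{\Delta},\cO_{\Delta})$ via the Gerstenhaber--Schack comparison (which is the content of \cite[Corollary 7.8.2]{MR2183254} already invoked in the paper). You correctly flag the genuinely delicate step, namely establishing that deformations of the abelian category match deformations of the chosen linear presentation; this is where most of the work in \cite{MR2183254} and \cite{MR2238922} lies.
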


The link with the usual geometric deformations of $X$ is provided by the Hochschild--Kostant--Rosenberg (HKR) theorem \cite{caldararu2005mukai2}, which implies that there is an isomorphism
\begin{equation}
I_X:\HT^{\bullet}(X) \xrightarrow{\simeq} \HH^{\bullet}(X),
\end{equation}
where
\begin{equation}
\HT^{\bullet}(X):=\bigoplus_{i+j=\bullet} \H^i(X,\wedge^j \cT_X)
\end{equation}
denotes the tangent cohomology of $X$, and $\cT_X$ is the tangent bundle of $X$. In particular, there is an isomorphism
\begin{equation}
\label{eq:hkr}
I_X:\H^2(X,\cO_X) \oplus \H^1(X,\cT_X) \oplus \H^0(X,\wedge^2\cT_X) \xrightarrow{\simeq} \HH^2(X).
\end{equation}
The middle cohomology group is well-known to classify the infinitesimal deformations of $X$ as a scheme. For a triple $(\alpha,\beta,\gamma)$ corresponding to class $u \in \HH^2(X)$ under the isomorphism \eqref{eq:hkr}, Toda \cite{toda2009deformations} explicitly constructs a $\bbC[\epsilon]/(\epsilon^2)$-linear abelian category $\Coh(X,u)$. We will not need the details of the construction, so let us just briefly mention the idea. In a first step, one can use $(\beta,\gamma) \in \H^1(X,\cT_X) \oplus \H^0(X,\wedge^2\cT_X)$ to deform $\cO_X$ to a sheaf of (typically noncommutative) $\bbC[\epsilon]/(\epsilon^2)$-algebras $\cO_X^{(\beta,\gamma)}$ on X. By representing $\alpha \in \H^2(X,\cO_X)$ as a \v{C}ech 2-cocycle $\{\alpha_{ijk}\}$, one constructs a \v{C}ech 2-cocycle $\widetilde{\alpha}=\{1-\epsilon \alpha_{ijk}\}$ with values in $Z(\cO_{X}^{(\beta,\gamma)})^{\times}$, the invertible elements of the center of $\cO_X^{(\beta,\gamma)}$. One can then define 
\begin{equation}
\Coh(X,u):=\Coh(\cO_X^{(\beta,\gamma)},\widetilde{\alpha})
\end{equation}
to be the category of $\widetilde{\alpha}$-twisted coherent modules over $\cO_X^{(\beta,\gamma)}$. In \cite{van2017non} it is shown that $\Coh(X,u)$ is indeed an infinitesimal deformation of $\Coh(X)$ corresponding to $u$ in the sense of Theorem \ref{th:lvdb}. Finally one sets 
\begin{equation}
\cD(X,u):=\cD(\Coh(X,u))
\end{equation}
to be the bounded derived category of this infinitesimal deformation of $\Coh(X)$.

\subsection{Functoriality of Hochschild cohomology}

It is well known that Hochschild cohomology only has limited functoriality properties \cite[\S 4.3]{keller2003derived}. In particular, for an arbitrary Fourier--Mukai functor, there is no induced morphism on Hochschild cohomology in either direction. There is however a type of correspondence which turns out to suffice for our needs.

Define a functor 
\begin{equation}
\cP \ast -:\cD(X \times X) \to \cD(X \times Y)\;;\,\cE \mapsto p_{13*}(p_{12}^*\cE \otimes p_{23}^*\cP),
\end{equation}
where $p_{ij}$ denotes the projection from $X \times X \times Y$ onto the corresponding factors. Similarly, one can define a functor $- \ast \cP:\cD(Y \times Y) \to \cD(X \times Y)$, and it is not hard to check that $\cP \ast \cO_{\Delta_X} \cong \cO_{\Delta_Y} \ast \cP \cong \cP$. Therefore, using \eqref{def-hh}, these functors give rise to the following correspondence:
\begin{equation}
\label{eq:corr}
\begin{tikzcd}[column sep=small]
& \Hom_{X \times Y}(\cP,\cP[\bullet]) & \\
\HH^{\bullet}(X) \ar{ru}{\cP \ast -} \ar[dashed,no head]{rr} & & \HH^{\bullet}(Y) \ar[swap]{ul}{- \ast \cP}
\end{tikzcd}
\end{equation}
allowing us to compare Hochschild classes on $X$ and $Y$ in the space $\Hom_{X \times Y}(\cP,\cP[\bullet])$, even though there is no induced morphism between $\HH^{\bullet}(X)$ and $\HH^{\bullet}(Y)$. 

\begin{dfn}
\label{def:obsuv}
For $u \in \HH^2(X)$ and $v \in \HH^2(Y)$, we define 
\begin{equation}
\ob_{\cP}(u,v):=-\cP \ast u + v \ast \cP \in \Hom_{X \times Y}(\cP,\cP[2])
\end{equation}
and say it is the \textit{obstruction class associated to the pair $(u,v)$}.
\end{dfn}

\begin{rmk}
If $F$ is fully faithful, then one checks that the morphism $\cP\ast-$ in \eqref{eq:corr} is an isomorphism, so by inverting it we obtain a morphism $\phi:\HH^{\bullet}(Y) \to \HH^{\bullet}(X)$, and $\ob_{\cP}(u,v)$ measures the difference between $u$ and $\phi(v)$.
\end{rmk}

\subsection{The characteristic morphism}

By \cite[Proposition 6.10]{MR2183254}, we know that for any object $\cE \in \cD(X)$ one can define a characteristic morphism 
\begin{equation}
\chi_{\cE}:\HH^2(X) \to \Hom_{X}(\cE,\cE[2]).
\end{equation}
If we regard the elements of $\HH^2(X)$ as natural transformations between the functors $\mathrm{id}_{\cD(X)}$ and $[2]$, then evaluating them on $\cE$ defines $\chi_{\cE}$; alternatively, if we view $\cE$ as an object in $\cD(\Spec(\bbC)\times X)$ then $\chi_\cE$ is the degree two part of the functor $-\ast\cE$. The characteristic morphism has a deformation theoretic interpretation given by the following theorem, which is the key to understanding deformations of Fourier--Mukai transforms. 

\begin{thm}\cite{MR2474321,toda2009deformations}
\label{thm:todalowen}
For $u \in \HH^2(X)$, the image $\chi_{\cE}(u)$ is exactly the obstruction to lifting $\cE$ to a perfect object of $\cD(X,u)$.
\end{thm}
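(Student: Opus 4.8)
The plan is to prove the statement by an explicit first-order obstruction calculation, carried out in Toda's concrete model $\cD(X,u)=\cD(\Coh(\cO_X^{(\beta,\gamma)},\widetilde{\alpha}))$ for the deformed category, and then to match the resulting obstruction cocycle against the description of $\chi_\cE$ as evaluation of $u$, viewed as a natural transformation $\mathrm{id}_{\cD(X)}\Rightarrow[2]$, on the object $\cE$.

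First I would fix representatives. Since $\cE$ is perfect and $X$ is smooth, write it as a bounded complex $(\cE^\bullet,d)$ of locally free sheaves, and choose a \v{C}ech--Hochschild cocycle $c$ for $u\in\HH^2(X)$ compatible with Toda's construction, so that composition in the deformed category is governed by $c$ in the form $f\cdot_\epsilon g=f\circ g+\epsilon\,c(f,g)$. A lift of $\cE$ to a perfect object of $\cD(X,u)$ then amounts to lifting the underlying locally free terms to $\widetilde{\alpha}$-twisted $\cO_X^{(\beta,\gamma)}$-modules — which one can arrange locally and is not where the interesting obstruction sits — together with a lift of the differential $\widetilde d=d+\epsilon\,d_1$ satisfying $\widetilde d\cdot_\epsilon\widetilde d=0$ for the deformed composition.

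Next I would extract the obstruction. Expanding $\widetilde d\cdot_\epsilon\widetilde d=0$ in powers of $\epsilon$, the order-zero term vanishes since $d^2=0$, and the order-$\epsilon$ term reads
\[ d\circ d_1+d_1\circ d+c(d,d)=0. \]
The first two terms are precisely the image of $d_1$ under the differential of the endomorphism complex $\Hom^\bullet_X(\cE,\cE)$, so a solution $d_1$ exists if and only if the class of $c(d,d)$ vanishes in $\Ext^2_X(\cE,\cE)=\Hom_X(\cE,\cE[2])$. By construction this class is the first-order obstruction to lifting $\cE$, and the remaining task is to identify it with $\chi_\cE(u)$. Here I would use the natural-transformation picture: under $\HH^\bullet(X)=\Ext^\bullet_{X\times X}(\cO_\Delta,\cO_\Delta)$ the cocycle $c$ represents a natural transformation $\mathrm{id}\Rightarrow[2]$, and its convolution action on the chosen model of $\cE$ — which is exactly how $c$ enters the term $c(d,d)$ — recovers $\chi_\cE(u)$ by definition.

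The hard part will be this last identification, since it requires reconciling the several incarnations of $\HH^2(X)$ that appear: Toda's \v{C}ech cocycle $(\alpha,\beta,\gamma)$ controlling the twisted, noncommutative deformed algebra; the $\Ext_{X\times X}(\cO_\Delta,\cO_\Delta)$ definition underlying the characteristic morphism; and the natural-transformation viewpoint used to evaluate on $\cE$. One must verify that the induced maps to $\Ext^2_X(\cE,\cE)$ coincide, with all signs and the $\bbC[\epsilon]/(\epsilon^2)$-bookkeeping consistent. This compatibility is the technical heart of Lowen's characteristic-morphism formalism, and the cleanest route is to invoke her comparison of the Hochschild complex of $\Coh(X)$ with the geometric model, which matches the abstract obstruction class directly to the concrete cocycle $c(d,d)$ computed above.
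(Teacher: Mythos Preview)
The paper does not prove this theorem; it is stated with citations to Lowen \cite{MR2474321} and Toda \cite{toda2009deformations} and used as a black box. So there is no ``paper's own proof'' to compare against --- your proposal is an attempt to sketch the argument from the cited literature rather than to reproduce anything in this paper.

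As for the sketch itself: the overall strategy is the right one and matches the Lowen--Van den Bergh framework, but one step is glossed over too quickly. You assert that lifting the locally free terms $\cE^i$ to $\widetilde{\alpha}$-twisted $\cO_X^{(\beta,\gamma)}$-modules ``is not where the interesting obstruction sits,'' and that the entire obstruction is captured by the equation $d\circ d_1 + d_1\circ d + c(d,d)=0$. But in Toda's concrete model the three components $(\alpha,\beta,\gamma)$ enter differently: $\alpha$ twists the gluing data, $\beta$ and $\gamma$ deform the local algebra structure, and part of the obstruction genuinely lives in how the local lifts of the $\cE^i$ fail to glue compatibly, not just in the differential. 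Packaging all of this as a single Hochschild $2$-cocycle $c$ acting via $f\cdot_\epsilon g = f\circ g + \epsilon\, c(f,g)$ is exactly what the comparison theorem of Lowen--Van den Bergh (and Dinh Van--Lowen \cite{van2017non}) accomplishes, but you should be clear that you are invoking that comparison rather than sidestepping it. Once that identification is granted, the expansion of $\widetilde{d}\cdot_\epsilon\widetilde{d}=0$ and the recognition of $[c(d,d)]\in\Ext^2_X(\cE,\cE)$ as $\chi_\cE(u)$ is correct and is indeed the content of Lowen's characteristic-morphism formalism.
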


We now use the characteristic morphism to construct a map
\begin{equation}
\begin{tikzcd}[row sep=tiny]
\widetilde{\chi}:\HH^2(X) \times \HH^2(Y) \ar{r}{K} & \HH^2(X \times Y) \ar{r}{\chi_{\cP}} & \Hom(\cP,\cP[2]),
\end{tikzcd}
\end{equation}
where $K$ is induced by the K\"unneth isomorphism for Hochschild cohomology. More precisely, we choose $K$ as follows
\begin{equation}
\label{eq:choicek}
K(u,v):=I_{X \times Y}(-p_X^*(\alpha,-\beta,\gamma)+p_Y^*(\alpha',\beta',\gamma')),
\end{equation}
where $p_X$ and $p_Y$ denote the projections from $X \times Y$ onto the factors, $I_X(\alpha,\beta,\gamma)=u$ and $I_Y(\alpha',\beta',\gamma')=v$. The reason for this choice will become clear in the next section.

\subsection{Deformations and Fourier--Mukai functors}

With this setup, we can now state the following theorem, which explains how deformations interact with Fourier--Mukai transforms.

\begin{thm}
\label{thm:def}
For $u \in \HH^2(X)$ and $v \in \HH^2(Y)$, the functor $F=\Phi_{\cP}:\cD(X) \to \cD(Y)$ lifts to a functor 
\begin{equation}
F_{u,v}:\cD(X,u) \to \cD(Y,v)
\end{equation}
if and only if $0=\ob_{\cP}(u,v) \in \Hom_{X \times Y}(\cP,\cP[2])$.
\end{thm}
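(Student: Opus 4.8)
The plan is to reduce the lifting of the functor $F$ to the lifting of its Fourier--Mukai kernel $\cP$, and then to feed this into Toda's deformation-theoretic interpretation of the characteristic morphism (Theorem \ref{thm:todalowen}). The key structural observation is that a $\bbC[\epsilon]/(\epsilon^2)$-linear lift $F_{u,v}:\cD(X,u) \to \cD(Y,v)$ of $F$ should again be a Fourier--Mukai-type functor, represented by a kernel living on an appropriate infinitesimal deformation of the product $X \times Y$. Since $\cP$ defines a functor \emph{out of} $\cD(X)$ and \emph{into} $\cD(Y)$, it behaves as a bimodule whose $Y$-side enters directly while its $X$-side enters through the opposite (dual) category. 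Passing to the opposite category negates the gerby ($\H^2(\cO)$) and Poisson ($\H^0(\wedge^2\cT)$) components of a Hochschild class while preserving the geometric one ($\H^1(\cT)$), and this is exactly the sign pattern $-p_X^*(\alpha,-\beta,\gamma)=p_X^*(-\alpha,\beta,-\gamma)$ built into the definition \eqref{eq:choicek} of $K$. Thus the correct product deformation in which to lift $\cP$ is $\cD(X \times Y, K(u,v))$, and I would first establish that $F$ admits a lift $\cD(X,u) \to \cD(Y,v)$ if and only if $\cP$ lifts to a perfect object of $\cD(X \times Y, K(u,v))$.

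With this reduction in hand, the second step is immediate from Theorem \ref{thm:todalowen}: the obstruction to lifting the perfect object $\cP$ to $\cD(X \times Y, K(u,v))$ is precisely the class $\chi_{\cP}(K(u,v)) = \widetilde{\chi}(u,v) \in \Hom_{X \times Y}(\cP,\cP[2])$. Hence $F$ lifts if and only if $\widetilde{\chi}(u,v)=0$, and it only remains to identify $\widetilde{\chi}(u,v)$ with the obstruction class $\ob_{\cP}(u,v)$.

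The final and most technical step is the identity $\widetilde{\chi}(u,v)=\ob_{\cP}(u,v)$, that is, $\chi_{\cP}(K(u,v)) = -\cP \ast u + v \ast \cP$. By linearity of $\chi_{\cP}$ and the definition \eqref{eq:choicek} of $K$, it suffices to treat the two summands separately: one shows that $\chi_{\cP}$ applied to the $Y$-contribution $I_{X \times Y}(p_Y^*(\alpha',\beta',\gamma'))$ equals $v \ast \cP$, and that $\chi_{\cP}$ applied to the $X$-contribution $I_{X \times Y}(p_X^*(\alpha,-\beta,\gamma))$ equals $\cP \ast u$; the overall minus sign on the $X$-part then produces $-\cP \ast u + v \ast \cP$. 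This is precisely where the results collected in Appendix \ref{app} enter: one expresses $\chi_{\cP}$ through the exponential Atiyah class of $\cP$, transports the pulled-back HKR classes across the K\"unneth decomposition, and matches the outcome against the degree-two parts of $\cP \ast -$ and $- \ast \cP$ that define the correspondence \eqref{eq:corr}. The sign flip on $\beta$ in the $X$-component is exactly the bookkeeping needed to convert $\chi_{\cP}\circ p_X^*$ into the operation $\cP \ast -$ rather than its dual.

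The main obstacle is this compatibility computation: tracking the signs and the contravariance of the $X$-factor simultaneously through the HKR isomorphism, the K\"unneth formula, and the characteristic morphism. The reduction in the first step, while conceptually the heart of the argument, follows Toda's framework for deformations of Fourier--Mukai kernels; the genuine difficulty lies in verifying that the three sign conventions---opposite category, the choice of $K$, and the definition of $\ob_{\cP}$---are mutually consistent, which is what makes the clean identity $\widetilde{\chi}=\ob_{\cP}$ come out correctly and thereby pins down the choice \eqref{eq:choicek}.
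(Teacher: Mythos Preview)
Your proposal is correct and follows essentially the same approach as the paper: reduce to lifting the kernel $\cP$ to $\cD(X \times Y, K(u,v))$ via Toda's framework, invoke Theorem~\ref{thm:todalowen} to identify the obstruction as $\widetilde{\chi}(u,v)$, and then verify the identity $\widetilde{\chi}(u,v)=\ob_{\cP}(u,v)$ using the Appendix results (Propositions~\ref{prop:todalem5.8} and~\ref{prop:char}). The only cosmetic difference is the order of presentation---the paper proves the identity first and the kernel-lifting reduction last---but the ingredients and their roles are identical.
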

\begin{proof}
We first check that $\ob_{\cP}(u,v)=\widetilde{\chi}(u,v)$. By \eqref{eq:choicek}, this reduces to checking that
\begin{align}
\label{eq:hkrprod'}
\cP \ast u &= \chi_{\cP}(I_{X \times Y}p_X^*(\alpha,-\beta,\gamma)),\\
\label{eq:hkrprod}
v\ast \cP &= \chi_{\cP}(I_{X \times Y}p_Y^*(\alpha',\beta',\gamma')).
\end{align}
Let us focus on  \eqref{eq:hkrprod} (the other case is similar): \begin{align}
v \ast \cP &= p_Y^*(\alpha',\beta',\gamma') \circ \exp(\at(\cP)) \\
&=\chi_{\cP}(I_{X \times Y}(p_Y^*(\alpha',\beta',\gamma')))
\end{align}
where the first equality uses \eqref{eq:todasquare}, and the second one uses Proposition \ref{prop:char}.

By Theorem \ref{thm:todalowen}, we find that $\ob_{\cP}(u,v)=0$ if and only if $\cP$ lifts to a perfect object $\cP_{u,v} \in \cD(X \times Y,K(u,v))$. It now suffices to show that $\cP_{u,v}$ can be used to define a functor $F_{u,v}:\cD(X,u) \to \cD(Y,v)$ lifting $F$. Toda explains how to do this in \cite[p.208]{toda2009deformations}, which one can follow word for word by our choice \eqref{eq:choicek} of $K$.
\end{proof}

The difficulty with applying this theorem in an actual example is that it is typically hard to compute $\ob_{\cP}(-,-)$. As we will see in the next section however, these classes show up naturally when studying $\bbP$-functors.

\section{Deforming $\bbP$-functors}

The aim of this section is to use the obstruction classes from Theorem \ref{thm:def} to give a generalisation of \cite[Proposition 1.4]{huybrechts2006pobjects} which gives conditions allowing one to obtain spherical objects on (and hence autoequivalences of the derived category of) the total space of a deformation of $Y$ from $\bbP^n$-objects on $Y$.

\subsection{$\bbS$-functors and $\bbP$-functors}

We first recall the definitions of a split spherical and a split $\bbP$-functor. 

\begin{dfn}\cite[\S 6.1]{cautis2012flops} 
\label{def:sph}
A Fourier--Mukai functor $F=\Phi_{\cP}:\cD(X) \to \cD(Y)$ is a split $\bbS^d$-functor if the following two conditions are satisfied:
\begin{enumerate}
\item\label{cond-1} $\cP_R \simeq \cP_L[-d]$.
\item\label{cond-2} $\cP_R \ast \cP \simeq \cO_{\Delta} \oplus \cO_{\Delta}[-d]$.
\end{enumerate}
\end{dfn}

\begin{dfn}\cite[Definition 4.1\footnote{This is a special case of Addington's definition where $H=[-2]$. It would be interesting to investigate what happens for arbitrary $H\in\Aut(\cD(X))$.}]{addington2011new}
\label{def:nickPn}
A functor $F=\Phi_{\cP}:\cD(X) \to \cD(Y)$ is a split $\bbP^n$-functor if the following three conditions are satisfied:
\begin{enumerate}
\item \label{cond-11} $\cP_R \simeq \cP_L[-2n]$.
\item \label{cond-22} There exists an isomorphism 
\begin{equation}
\label{eq:gammiso}
\gamma=(\gamma_i)_{i=0}^{n}: \bigoplus_{i=0}^{n}\cO_{\Delta}[-2i]\to \cP_R \ast \cP 
\end{equation}
\item \label{cond-33} The morphism
\begin{equation}
\tilde{\gamma_1}:\cP_R \ast \cP \ast \cO_{\Delta} \xrightarrow{\cP_R \ast \cP \ast \gamma_1} \cP_R \ast   \cP \ast \cP_R \ast \cP[2] \xrightarrow{\cP_R \ast \epsilon \ast \cP[2]} \cP_R \ast \cP[2]
\end{equation}
becomes an isomorphism after taking $\cH^i$, for $i=2k$ and $k=0,\ldots,n-1$.
\end{enumerate}
\end{dfn}

\begin{exa}
If $X=\Spec(\bbC)$ and $F$ is a split $\bbP^n$-functor, then $\cE:=F(\bbC) \in \cD(Y)$ is a $\bbP^n$-object, as introduced in \cite{huybrechts2006pobjects}.
\end{exa}

\begin{rmk}
For clarity of exposition and because this is all we need for our main example of interest in Section \ref{sec:hilb}, we only discuss split $\bbS$-functors and $\bbP$-functors. For this reason, we usually leave out the word `split', and simply refer to $\bbS$-functors and $\bbP$-functors. For a brief discussion of the non-split setting, as considered in \cite{MR3692883,pfunctor}, we refer to Section \ref{sec:con}.
\end{rmk}

\subsection{Atiyah classes and obstructions}
\label{sec:atiyah}
If $\Delta\coloneqq\Delta_Y\subset Y\times Y$ is the diagonal then there is an exact sequence:
\begin{equation}
\label{eq:atiyah}
0 \to \cI_{\Delta}/\cI_{\Delta}^2 \to \cO_{Y\times Y}/\cI_{\Delta}^2\to \cO_{\Delta} \to 0,
\end{equation}
which induces a triangle whose boundary map is the universal Atiyah class of $Y$: 
\begin{equation}
\label{alpha}
\at_Y:\cO_\Delta\to\Omega_\Delta[1],
\end{equation}
where $\Omega_{\Delta}:=\cI_{\Delta}/\cI_{\Delta}^2$. For any object $\cE\in\cD(Y)$, applying $p_{2*}(p_1^*\cE \otimes -)$ to \eqref{eq:atiyah}, we recover the classical Atiyah class $\at(\cE):\cE\to\cE\otimes\Omega_Y[1]$. 

For an object $\cP\in\cD(X\times Y)$, the isomorphism $\Omega_{X\times Y}\simeq p_X^*\Omega_X\oplus p_Y^*\Omega_Y$ means that the Atiyah class: 
\begin{equation}
\at(\cP):\cP\to\cP\otimes\Omega_{X\times Y}[1],
\end{equation}
decomposes $\at(\cP)=\at_X(\cP) + \at_Y(\cP)$ 
into `partial' Atiyah classes, where:
\begin{equation}
\label{partial-atiyah}
\at_X(\cP):\cP\to\cP\otimes p_X^*\Omega_X[1]
\qquad\text{and}\qquad
\at_Y(\cP):\cP\to\cP\otimes p_Y^*\Omega_Y[1].
\end{equation}

Suppose $X$ and $Y$ are smooth complex projective varieties and assume $\cY \to C$ is a smooth family over a smooth curve $C$ with distinguished fibre $j:Y\hra\cY$. Then we denote the Kodaira--Spencer class of this family by $\kappa(\cY)\in\H^1(Y,\cT_Y)$. For a Fourier--Mukai functor $F=\Phi_{\cP}:\cD(X)\to\cD(Y)$, the morphism
\begin{equation}
\ob(\cP)\coloneqq(1_\cP\otimes p_Y^*\kappa(\cY))\circ \at_Y(\cP)\in\Hom_{X\times Y}(\cP,\cP[2]),
\end{equation}
is known to be the global obstruction class to deforming $\cP$ sideways (to first order) to neighbouring fibres in the trivially extended family 
\begin{equation}
\begin{tikzcd}
X \times Y \ar[hook]{r}{i} \ar{d} & X \times \cY \ar{d} \\ 
X \times \Spec(\bbC) \ar[hook]{r} & X\times C
\end{tikzcd}
\end{equation} 
where $i=\id_X \times j$. See \cite[IV.3.1.8]{illusie1971complexe} for the original statement and much more besides, or \cite[Proposition 3.8]{buchweitz1998atiyah} for an elegant summary. In fact, this can also be deduced from Theorem \ref{thm:todalowen} together with Propositions \ref{prop:todalem5.8} and \ref{prop:char}. For more on Atiyah classes and their exponential versions, we refer to Appendix \ref{app}.

\subsection{Deforming $\bbP$-functors to $\bbS$-functors}

We are now ready to prove the promised generalisation of \cite[Proposition 1.4]{huybrechts2006pobjects}. 

\begin{lem}
\label{prop:notinimage}
Let $F=\Phi_{\cP}:\cD(X) \to \cD(Y)$ be a $\bbP^n$-funcor and let $v \in\HH^2(Y)$. Then the following are equivalent:
\begin{enumerate}
\item\label{cond-111} for all $0\leq k \leq n-1$, the morphisms 
\begin{equation}
\cH^{2k}(\cP_R \ast v \ast \cP):\cH^{2k}(\cP_R \ast \cO_{\Delta} \ast \cP) \to \cH^{2k+2}(\cP_R \ast  \cO_{\Delta} \ast \cP)
\end{equation}
are isomorphisms.
\item\label{cond-222} $\forall u \in \HH^2(X): \ob_{\cP}(u,v) \neq 0$.
\end{enumerate}
\end{lem}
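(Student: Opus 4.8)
The plan is to reduce both conditions to the (non)vanishing of a single scalar. Since $\cO_{\Delta}\ast\cP\simeq\cP$, Definition~\ref{def:nickPn}\eqref{cond-22} gives $\cP_R\ast\cO_{\Delta}\ast\cP\simeq\cP_R\ast\cP\simeq\bigoplus_{i=0}^{n}\cO_{\Delta}[-2i]$, so $\cH^{2k}(\cP_R\ast\cO_{\Delta}\ast\cP)\simeq\cO_{\Delta}$ for $0\le k\le n$; as $X$ is connected, $\Hom_{X\times X}(\cO_{\Delta},\cO_{\Delta})=\H^0(X,\cO_X)=\bbC$, so the map in \eqref{cond-111} is multiplication by a scalar $c_k:=\cH^{2k}(\cP_R\ast v\ast\cP)\in\bbC$, and \eqref{cond-111} says $c_k\neq0$ for all $0\le k\le n-1$. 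On the other hand, by Definition~\ref{def:obsuv}, $\ob_{\cP}(u,v)=0$ means $v\ast\cP=\cP\ast u$, so \eqref{cond-222} is equivalent to $v\ast\cP\notin\im\big(\cP\ast-\colon\HH^2(X)\to\Hom_{X\times Y}(\cP,\cP[2])\big)$, the map from \eqref{eq:corr}.

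Next I would identify this image by adjunction. As $\cP_R$ is the kernel of the right adjoint $R$ of $F$, the (post-composition) adjunction $\cP\ast-\dashv\cP_R\ast-$ yields an isomorphism $\Hom_{X\times Y}(\cP,\cP[2])\simeq\Hom_{X\times X}(\cO_{\Delta},\cP_R\ast\cP[2])\simeq\bigoplus_{i=0}^{n}\HH^{2-2i}(X)=\HH^2(X)\oplus\HH^0(X)$, sending $f$ to $(\cP_R\ast f)\circ\eta$ for the unit $\eta\colon\cO_{\Delta}\to\cP_R\ast\cP$. Because $\Hom_{X\times X}(\cO_{\Delta},\cO_{\Delta}[-2i])=\HH^{-2i}(X)=0$ for $i>0$, the unit is (a nonzero multiple of) the inclusion of the $i=0$ summand, and tracing through shows $\cP\ast u\mapsto(u,0)$; hence $\im(\cP\ast-)=\HH^2(X)\oplus0$. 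Let $\tau\in\Hom_{X\times Y}(\cP,\cP[2])$ be the class adjoint to $\gamma_1$ (so $(\cP_R\ast\tau)\circ\eta=\gamma_1$), which spans the complementary $\HH^0(X)$-line. Writing $v\ast\cP=\cP\ast u_0+b\,\tau$ with $b\in\bbC$, we get $\ob_{\cP}(u,v)=\cP\ast(u_0-u)+b\,\tau$, so $\ob_{\cP}(u,v)=0$ has a solution $u$ iff $b=0$; thus \eqref{cond-222} holds iff $b\neq0$.

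It remains to show \eqref{cond-111} is also equivalent to $b\neq0$, and this is exactly where Definition~\ref{def:nickPn}\eqref{cond-33} enters. Applying $\cP_R\ast-$ to $v\ast\cP=\cP\ast u_0+b\,\tau$: the summand $\cP_R\ast\cP\ast u_0=\bigoplus_{i}u_0[-2i]$ is the diagonal central action of $u_0$, which sends the $i$-th summand of $\cP_R\ast\cP$ to the $i$-th summand of $\cP_R\ast\cP[2]$ and therefore contributes $0$ to every subdiagonal map $\cH^{2k}(-)\colon\cH^{2k}\to\cH^{2k+2}$. The triangle identities identify $\cP_R\ast\tau$ with the morphism $\tilde{\gamma_1}$ of \eqref{cond-33}, so $c_k=\cH^{2k}(\cP_R\ast v\ast\cP)=b\cdot\cH^{2k}(\tilde{\gamma_1})$. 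By \eqref{cond-33} each $\cH^{2k}(\tilde{\gamma_1})$ is an isomorphism, hence a nonzero scalar, so $c_k\neq0$ for all $k$ iff $b\neq0$; that is, \eqref{cond-111} holds iff $b\neq0$.

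Combining the two reductions gives \eqref{cond-222}$\Leftrightarrow(b\neq0)\Leftrightarrow$\eqref{cond-111}, as desired. The forward implication \eqref{cond-111}$\Rightarrow$\eqref{cond-222} is in fact soft—it only needs $c_0\neq0$ and the $u$-independence of $\cH^0(\cP_R\ast\ob_{\cP}(u,v))$—so the real content lies in the converse. I expect the main obstacle to be the identification $\cP_R\ast\tau\simeq\tilde{\gamma_1}$: one must check that the unit/counit bookkeeping built into the definition of $\tilde{\gamma_1}$ in \eqref{cond-33} is precisely what the adjunction $\cP\ast-\dashv\cP_R\ast-$ produces from the $\gamma_1$-component $\tau$ of $v\ast\cP$, and, alongside it, that the central part $\cP_R\ast\cP\ast u_0$ really does drop out of all the cohomology-sheaf maps. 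Granting this compatibility, condition \eqref{cond-33} does exactly the work of upgrading the single scalar $b\neq0$ to the full family \eqref{cond-111}.
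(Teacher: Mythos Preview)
Your proof is correct and follows essentially the same route as the paper: both decompose $\Hom(\cP,\cP[2])\simeq\HH^2(X)\oplus\HH^0(X)$ via adjunction, identify $\im(\cP\ast-)$ with the first summand, and reduce each of conditions \eqref{cond-111} and \eqref{cond-222} to the nonvanishing of the $\HH^0$-component (your $b$, the paper's $v_2$) of $v\ast\cP$. The identification $\cP_R\ast\tau=\tilde{\gamma}_1$ you flag is indeed immediate from the adjunction formula $\tau=(\epsilon\ast\cP[2])\circ(\cP\ast\gamma_1)$, and your explicit appeal to condition~\eqref{cond-33} simply makes transparent a step the paper compresses into the matrix assertion \eqref{eq:descrf}.
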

\begin{proof}
By \eqref{eq:gammiso}, after conjugating by $\gamma$, any morphism $f \in \Hom(\cP_R \ast \cP,\cP_R \ast \cP[2])$ can be represented as a matrix $f=(f_{ij})_{i,j=1}^{n+1}$, for $f_{ij}\in\HH^{2j-2i+2}(X)$ (of course $\HH^{<0}(X)=0$). Since 
\begin{equation}
\begin{tikzcd}[row sep=tiny]
\label{eq:vastp}
\Hom(\cP,\cP[2]) \ar{r}{\sim} & \Hom(\cO_{\Delta},\cP_R\ast \cP[2])\ar{r}{\sim} & \HH^2(X) \oplus \HH^0(X)\\
v \ast \cP \ar[mapsto]{rr} & & (v_1,v_2) 
\end{tikzcd}
\end{equation}
for a morphism of the form $f=\cP_R \ast v \ast \cP$, we have
\begin{equation}
\label{eq:descrf}
f_{ij}=
\begin{cases}
v_1 & \text{ if } i=j \\
v_2 & \text{ if } j=i-1 \\
0 & \text{ otherwise } 
\end{cases}
\end{equation} 
In particular, we see that condition \eqref{cond-111} is satisfied if and only if $0 \neq v_2 \in \HH^0(X) \cong \bbC$. We now claim there is a commuting diagram  
\begin{equation}
\label{comsq}
\begin{tikzcd}[column sep=huge]
\HH^2(X)\ar[hookrightarrow]{rr}{\cP\ast -} \ar[hookrightarrow]{d}{(\id,0)}&&\Hom(\cP,\cP[2])\ar{d}{\wr} \\
\HH^2(X) \oplus \HH^0(X) \ar{rr}{\Hom(\cO_{\Delta}[-2],\gamma)}[swap]{\sim} && 
\Hom(\cO_{\Delta},\cP_R\ast \cP[2]) .
\end{tikzcd}
\end{equation} 
Indeed, the unit $\eta:\cO_{\Delta} \to \cP_R\ast \cP$ induces a map:
\begin{equation}
\Hom(\cO_{\Delta}[-2],\eta):\Hom(\cO_{\Delta},\cO_{\Delta}[2]) \to \Hom(\cO_{\Delta},\cP_R\ast \cP[2])\;;\,\xi\mapsto\eta\circ\xi,\end{equation} 
which corresponds to the top-right composition in the diagram. Moreover, the isomorphism
\begin{equation}
\Hom(\cO_{\Delta},\gamma): \Hom(\cO_{\Delta},\bigoplus_{i=0}^n \cO_{\Delta}[-2i]) \to\Hom(\cO_{\Delta},\cP_R \ast \cP)
\end{equation}
maps the inclusion $i:\cO_{\Delta} \to \bigoplus_{i=0}^n \cO_{\Delta}[-2i]$ to a non-zero scalar multiple of $\eta$, since the domain (and hence also codomain) of $\Hom(\cO_{\Delta},\gamma)$ is one-dimensional, so we can assume $\gamma \circ i = \eta$ (if necessary, we multiply $\gamma$ by the appropriate scalar). Hence 
\begin{equation}
\Hom(\cO_{\Delta}[-2],\eta)=\Hom(\cO_{\Delta}[-2],\gamma \circ i)=\Hom(\cO_{\Delta}[-2],\gamma) \circ \Hom(\cO_{\Delta}[-2],i)
\end{equation}
and we are done. 

Combining \eqref{eq:descrf} with \eqref{comsq}, there exists $u \in \HH^2(X)$ such that $\ob_{\cP}(u,v)=0$ (so that $\cP_R \ast v \ast \cP=\cP_R \ast \cP \ast u$) if and only if $f_{ii}=u$ for $1\leq i \leq n+1$ and $f_{ij}=0$ for $i \neq j$, if and only if $v_2=0$, and so we are done.
\end{proof}

\begin{thm}
\label{thm:pbecomess}
Let $F=\Phi_{\cP}:\cD(X) \to \cD(Y)$ be a $\bbP^n$-functor, and $j:Y \hookrightarrow \cY$ a one-parameter deformation of $Y$ with Kodaira--Spencer class $\kappa(\cY) \in \H^1(Y,\cT_Y)$. Assume furthermore that $\HH^{2n+2}(X)=0$. Then for every $u \in \HH^2(X)$, the obstruction class
\begin{equation}
\label{eq:obzero}
\ob_{\cP}(u,I_Y(\kappa(\cY)) \neq 0
\end{equation}
if and only if the functor 
\begin{equation}
j_*F:\cD(X) \to \cD(\cY)
\end{equation}
is an $\bbS^{2n+1}$-functor.
\end{thm}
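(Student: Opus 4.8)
The plan is to verify the two defining conditions of an $\bbS^{2n+1}$-functor from Definition \ref{def:sph} for $j_*F = \Phi_{i_*\cP}$, where $i = \id_X \times j$, and to translate condition \eqref{cond-2} into the numerical criterion of Lemma \ref{prop:notinimage}. Write $v := I_Y(\kappa(\cY)) \in \HH^2(Y)$ and $M := \cP_R \ast \cP$; the $\bbP^n$-hypothesis \eqref{cond-22} gives $M \cong \bigoplus_{i=0}^n \cO_{\Delta}[-2i]$. The structural input I would isolate first is that, since $Y$ is a fibre of $\cY \to C$, its normal bundle is trivial, so that $j^! \cong j^*[-1]$ and the kernel $\cK \in \cD(Y \times Y)$ of $j^! j_*$ sits in a fundamental triangle
\begin{equation}
\cO_{\Delta} \to \cK \to \cO_{\Delta}[-1] \xrightarrow{w} \cO_{\Delta}[1],
\end{equation}
whose connecting morphism $w \in \Hom(\cO_{\Delta},\cO_{\Delta}[2]) = \HH^2(Y)$ is the universal obstruction class. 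This is precisely the Kodaira--Spencer-times-Atiyah description of $\ob(\cP)$ from \S\ref{sec:atiyah} applied to the identity functor, and identifying $w$ with $v$ is where Theorem \ref{thm:todalowen} together with Propositions \ref{prop:todalem5.8} and \ref{prop:char} enter.

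For condition \eqref{cond-1}, I would use that $(j_*F)_R = R \circ j^!$ and $(j_*F)_L = L \circ j^*$, so on kernels $(i_*\cP)_R \cong \cP_R \ast \cK_{j^!}$ and $(i_*\cP)_L \cong \cP_L \ast \cK_{j^*}$, where $\cK_{j^!},\cK_{j^*}$ are the kernels of $j^!,j^*$. Feeding in $j^! \cong j^*[-1]$ and the $\bbP^n$-relation $\cP_R \cong \cP_L[-2n]$ of \eqref{cond-11}, these combine to $(i_*\cP)_R \cong (i_*\cP)_L[-(2n+1)]$: the shift $2n$ from the $\bbP^n$-structure and the shift $1$ from the codimension-one embedding add up exactly to $d = 2n+1$.

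For condition \eqref{cond-2}, note that $(i_*\cP)_R \ast (i_*\cP) \cong \cP_R \ast \cK \ast \cP$. Convolving the fundamental triangle with $\cP_R \ast (-) \ast \cP$ and using $\cO_{\Delta} \ast \cP \cong \cP$ produces a triangle
\begin{equation}
M \to (i_*\cP)_R \ast (i_*\cP) \to M[-1] \xrightarrow{\delta[-1]} M[1], \qquad \delta := \cP_R \ast v \ast \cP.
\end{equation}
Passing to the long exact sequence of cohomology sheaves and substituting the matrix description \eqref{eq:descrf} of $\delta$ from the proof of Lemma \ref{prop:notinimage}, the induced maps $\cH^{2k}(\delta)\colon \cO_{\Delta} \to \cO_{\Delta}$ are multiplication by the scalar $v_2 \in \HH^0(X) \cong \bbC$ (the diagonal $v_1$-part contributes nothing on cohomology sheaves). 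If $v_2 \neq 0$, these maps are isomorphisms for $0 \le k \le n-1$ and the sequence collapses, leaving only $\cH^0 \cong \cO_{\Delta}$ and $\cH^{2n+1} \cong \cO_{\Delta}$; the resulting two-step object is classified by an extension class in $\Hom(\cO_{\Delta},\cO_{\Delta}[2n+2]) = \HH^{2n+2}(X)$, which vanishes by hypothesis, so it splits as $\cO_{\Delta} \oplus \cO_{\Delta}[-(2n+1)]$ and \eqref{cond-2} holds. If instead $v_2 = 0$, all these maps vanish, $(i_*\cP)_R \ast (i_*\cP)$ acquires a nonzero cohomology sheaf in every degree from $0$ to $2n+1$, and \eqref{cond-2} fails.

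Finally, Lemma \ref{prop:notinimage} applied to $v = I_Y(\kappa(\cY))$ says that $v_2 \neq 0$ is equivalent to $\ob_{\cP}(u, I_Y(\kappa(\cY))) \neq 0$ for all $u \in \HH^2(X)$, which closes the equivalence. The main obstacle is the first step: constructing the fundamental triangle and pinning down its connecting map as $v$ (equivalently $\delta = \cP_R \ast v \ast \cP$ after convolution). This is the bridge between the honest geometric deformation $\cY$ and the Hochschild-theoretic obstruction $\ob_{\cP}$, and it is exactly where the shift bookkeeping for the trivial normal bundle and the normalisation matching the scalar $v_2$ of Lemma \ref{prop:notinimage} must be handled with care.
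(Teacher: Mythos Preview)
Your proposal is correct and follows essentially the same route as the paper. The only cosmetic difference is that you package the key obstruction triangle on $Y\times Y$ (as the kernel $\cK$ of $j^!j_*$, then convolve with $\cP_R\ast(-)\ast\cP$), whereas the paper writes it on $X\times Y$ as the triangle $\cP[1]\to i^*i_*\cP\to\cP\xrightarrow{\ob(\cP)}\cP[2]$ and then convolves with $\cP_R\ast(-)$, followed by a projection-formula chase to identify $\cP_R\ast(i^*i_*\cP)$ with $(i_*\cP)_R\ast(i_*\cP)$; both produce the same triangle in $\cD(X\times X)$ with connecting map $\cP_R\ast v\ast\cP$, and the remaining steps (Lemma~\ref{prop:notinimage}, the cohomology-sheaf computation, and the formality from $\HH^{2n+2}(X)=0$) are identical.
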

\begin{proof}
We first check condition \eqref{cond-1} in Definition \ref{def:sph}. There are adjunctions $Lj^* \dashv j_*F \dashv Rj^!$, and since $F$ is a $\bbP^n$-functor, we have
\begin{align}
Rj^! &\simeq Lj^![-2n] \\
& \simeq Lj^* \otimes \omega_j[-2n-1] \\
& \simeq Lj^* \otimes \cO_Y(Y)[-2n-1] \\
& \simeq Lj^*[-2n-1].
\end{align}

Now we check condition \eqref{cond-2} in Definition \ref{def:sph}. By combining \cite[Corollary 11.4(ii)]{huybrechts2006fourier} with \cite[Proposition 3.1]{huybrechts2006pobjects}, and using that the normal bundle $\cN_{X \times Y/X \times \cY} \simeq \cO_{X \times Y}$ is trivial, there exists a distinguished triangle: 
\begin{equation}
\label{eq:obs-triangle}
\cP[1] \to i^*i_*\cP\to\cP \xrightarrow{\ob(\cP)}\cP[2].
\end{equation}
Next we observe that \eqref{eq:todasquare} gives:
\begin{align}
\ob(\cP)&=(1_\cP\otimes p_Y^*\kappa(\cY))\circ \at_Y(\cP)\\
&=I_Y(\kappa(\cY)) \ast \cP.
\end{align}
Applying the (contravariant) convolution functor $\cP_R \ast -$ to \eqref{eq:obs-triangle} hence yields the triangle:
\begin{equation}
\label{eq:sheaf-triangle}
\cP_R \ast \cP \to \cP_R \ast (i^*i_*\cP) \to \cP_R \ast \cO_{\Delta} \ast \cP[-1] \xrightarrow{\cP_R \ast I_Y(\kappa(\cY)) \ast \cP} \cP_R \ast \cO_{\Delta} \ast \cP[1]
\end{equation}
in $\cD(X \times X)$. Using Lemma \ref{prop:notinimage}, assumption \eqref{eq:obzero} holds if and only if the morphisms 
\begin{equation}
\cH^{2k}(\cP_R \ast I_Y(\kappa(\cY)) \ast \cP):\cH^{2k}(\cP_R \ast \cO_{\Delta} \ast \cP) \to \cH^{2k+2}(\cP_R \ast  \cO_{\Delta} \ast \cP)
\end{equation}
are isomorphisms for $0\leq k \leq n-1$. Using the isomorphism \eqref{eq:gammiso}, we hence conclude that
\begin{equation}
\cH^i(\cP_R \ast (i^*i_*\cP))\simeq
\begin{cases}
\cO_{\Delta} & \text{for } i=0, 2n+1, \\
0 & \text{otherwise.} 
\end{cases}
\end{equation}

By assumption, we have $\HH^{2n+2}(X)=0$, and so we can deduce from Lemma \ref{lem:formal} that 
\begin{equation}
\cP_R \ast (i^*i_*\cP) \simeq \cO_{\Delta} \oplus \cO_{\Delta}[-2n-1].
\end{equation}
Finally, we compute:
\begin{align}
\cP_R \ast (i^*i_*\cP) & = p_{13*}(p_{12}^*i^*i_*\cP \otimes p_{23}^*\cP_R) \\
& \simeq p_{13*}((i \times \id)^*q_{12}^{*}i_*\cP \otimes p_{23}^*\cP_R) \\
& \simeq p_{13*}(i \times \id)_*((i \times \id)^*q_{12}^{*}i_*\cP \otimes p_{23}^*\cP_R) \\
& \simeq p_{13*}(q_{12}^{*}i_*\cP \otimes (i \times \id)_*p_{23}^*\cP_R) \\
& \simeq p_{13*}(q_{12}^{*}i_*\cP \otimes q_{23}^{*}(j \times \id)_*\cP_R) \\
& \simeq ((j \times \id)_*\cP_R) \ast (i_*\cP),
\end{align}
where $p_{ij}$ (respectively $q_{ij}$) are the projections from $X \times Y \times X$ (respectively $X \times \cY \times X$) onto the factors. One then checks that $i_*\cP$ is the Fourier--Mukai kernel for $j_*F$, and 
\begin{equation}
(i_*\cP)_R\simeq(j \times \id)_*\cP_R,
\end{equation}
so $j_*F$ indeed satisfies Definition \ref{def:sph} and is hence an $\bbS^{2n+1}$-functor. If assumption \eqref{eq:obzero} does not hold, then one sees in the same way that $\cP_R \ast (i^*i_*\cP)$ does not have the correct cohomology sheaves and hence $j_*F$ is not spherical.
\end{proof}

We have used the following lemma, which can be proved by induction on the number of non-zero cohomology sheaves of $\cE$.

\begin{lem}\cite[Lemma 2.11]{markman2011integral}
\label{lem:formal}
Let $S$ be a scheme and $\cE$ an object in $\cD(S)$. Assume that 
\begin{equation}
\Ext_S^{i+1}(\cH^j(\cE),\cH^{j-i}(\cE))=0,
\end{equation}
for all $j \in \bbZ$ and $i>0$. Then $\cE$ is formal, i.e. it is isomorphic to $\bigoplus_j\cH^j(\cE)[-j]$ in $\cD(S)$. 
\end{lem}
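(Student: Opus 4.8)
The plan is to induct on the number $N$ of nonzero cohomology sheaves of $\cE$, which is finite since $\cE$ lies in the bounded derived category $\cD(S)$. The base case $N=1$ is immediate: if $\cE$ has a single nonzero cohomology sheaf $\cH^j(\cE)$, then the canonical truncation morphisms already exhibit an isomorphism $\cE \simeq \cH^j(\cE)[-j]$, which is of the desired form.

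For the inductive step, I would let $a$ be the smallest integer with $\cH^a(\cE) \neq 0$ and consider the canonical truncation triangle
\[
\tau_{\leq a}\cE \to \cE \to \tau_{\geq a+1}\cE \xrightarrow{w} \tau_{\leq a}\cE[1].
\]
Because $a$ is the lowest degree in which $\cE$ has cohomology, $\tau_{\leq a}\cE \simeq \cH^a(\cE)[-a]$; meanwhile $\tau_{\geq a+1}\cE$ has exactly $N-1$ nonzero cohomology sheaves, all occurring among the $\cH^j(\cE)$, so the vanishing hypothesis is inherited and the induction hypothesis gives $\tau_{\geq a+1}\cE \simeq \bigoplus_{j>a}\cH^j(\cE)[-j]$.

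The crux is to show that the connecting morphism $w$ vanishes, for then the triangle splits and yields $\cE \simeq \tau_{\leq a}\cE \oplus \tau_{\geq a+1}\cE \simeq \bigoplus_j \cH^j(\cE)[-j]$, completing the induction. After the identifications above, $w$ lives in
\[
\Hom\Big(\bigoplus_{j>a}\cH^j(\cE)[-j],\, \cH^a(\cE)[1-a]\Big) \simeq \bigoplus_{j>a}\Ext^{\,j-a+1}_S\big(\cH^j(\cE),\cH^a(\cE)\big).
\]
Setting $i := j-a > 0$, so that $a = j-i$, the $j$-th summand becomes $\Ext^{i+1}_S(\cH^j(\cE),\cH^{j-i}(\cE))$, which is exactly one of the groups assumed to vanish. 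Hence $w=0$.

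I do not expect a serious obstacle here, since this is a standard formality argument; the only point demanding care is matching the cohomological degree of $w$ with the precise indexing $\Ext^{i+1}(\cH^j,\cH^{j-i})$ in the hypothesis (equivalently, one may truncate from the top and peel off the highest cohomology sheaf, with the symmetric bookkeeping). One should also record the elementary triangulated-category fact that a distinguished triangle whose connecting morphism is zero splits as the direct sum of its outer terms.
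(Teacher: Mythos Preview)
Your proof is correct and follows exactly the approach indicated in the paper, which merely states that the lemma ``can be proved by induction on the number of non-zero cohomology sheaves of $\cE$'' without spelling out the details. Your argument via the canonical truncation triangle and the vanishing of the connecting morphism is precisely the intended one.
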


\begin{exa}
If $X=\Spec(\bbC)$ then Theorem \ref{thm:pbecomess} specialises to \cite[Proposition 1.4]{huybrechts2006pobjects}. Indeed, in  that case $F=\cP \otimes -: \cD(\Spec(\bbC)) \to \cD(Y)$, for $\cP$ a $\bbP^n$-object in $\cD(Y)$. Since $\HH^2(\Spec(\bbC))=\HH^{2n+2}(\Spec(\bbC))=0$, condition \eqref{eq:obzero} reduces to checking that 
\begin{equation}
\ob_{\cP}(0,I_Y(\kappa(\cY)))=I_Y(\kappa(\cY)) \ast \cP  =(1_\cP\otimes\kappa(\cY)) \circ \at(\cP) \neq 0,
\end{equation}
where the second equality follows from diagram \eqref{eq:todasquare}. This is exactly the condition in \cite[Proposition 1.4]{huybrechts2006pobjects} for the pushforward $j_*\cP \in \cD(\cY)$ to become a spherical object.
\end{exa}

Hence, all examples discussed in \cite{huybrechts2006pobjects} also apply here. To give examples of Theorem \ref{thm:pbecomess} for $X\neq \Spec(\bbC)$, we need to be able to check condition \eqref{eq:obzero}, which is not easy in practice. In Section \ref{sec:hilb} we will work out a non-trivial example based on the work of Markman and Markman--Mehrotra.

\begin{rmk}
Since pushforward along the inclusion $j:Y\hra\cY$ is a spherical functor, another way to view Theorem \ref{thm:pbecomess} is that it provides a criterion for when the composition of a (specific) spherical functor with a (split) $\bbP^n$-functor is again spherical. It would be interesting to know if such a criterion exists for more general spherical and $\bbP^n$-functors.
\end{rmk}

\section{Intertwining symmetries}

In this section, we explain how \cite[Proposition 2.7]{huybrechts2006pobjects} generalises to our setting. We first give a reminder of the spherical twist (respectively $\bbP$-twist) associated to a $\bbS$-functor (respectively $\bbP$-functor). For a Fourier--Mukai functor $F=\Phi_{\cP}:\cD(X) \to \cD(Y)$, we will always denote by 
\begin{equation}
\epsilon:\cP \ast \cP_R \to \cO_{\Delta_Y}
\end{equation}
the counit of the adjunction.

\begin{prop}\cite{addington2011new,MR3692883,kuznetsov2015fractional,meachan2016note,rouquier2006categorification}
If $F=\Phi_{\cP}:\cD(X) \to \cD(Y)$ is a split $\bbS$-functor, then the functor 
\begin{equation}
T_{F}:=\Phi_{\cS}:\cD(Y) \to \cD(Y)
\end{equation}
associated to the kernel $\cS:=\cone(\epsilon) \in \cD(Y \times Y)$ is an autoequivalence.
\end{prop}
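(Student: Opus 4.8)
The plan is to exhibit an explicit two-sided inverse kernel and verify invertibility by convolution, mirroring the object-case argument of Seidel--Thomas \cite{seidel2001braid}. Writing the defining triangle of the twist kernel as
\begin{equation}
\cP\ast\cP_R\xrightarrow{\epsilon}\cO_{\Delta_Y}\to\cS\to\cP\ast\cP_R[1],
\end{equation}
I would take as candidate inverse the kernel of the \emph{dual twist}
\begin{equation}
\cS':=\cone\big(\cO_{\Delta_Y}\xrightarrow{\eta}\cP\ast\cP_L\big)[-1],
\end{equation}
where $\eta$ here denotes the unit of the adjunction $L\dashv F$. Condition \eqref{cond-1} of Definition \ref{def:sph} lets me rewrite $\cP\ast\cP_L\simeq(\cP\ast\cP_R)[d]$, so that $\cS$ and $\cS'$ are built from the same convolution $\cP\ast\cP_R$ up to shift. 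The goal is then to establish isomorphisms $\cS\ast\cS'\simeq\cO_{\Delta_Y}\simeq\cS'\ast\cS$ of kernels in $\cD(Y\times Y)$, which exhibit $\Phi_{\cS}$ as an autoequivalence with inverse $\Phi_{\cS'}$.

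Before the main computation I would record the shape of the cotwist. The unit of $F\dashv R$ is a map $\cO_{\Delta_X}\to\cP_R\ast\cP$, and condition \eqref{cond-2} identifies $\cP_R\ast\cP\simeq\cO_{\Delta_X}\oplus\cO_{\Delta_X}[-d]$. Since $\Hom(\cO_{\Delta_X},\cO_{\Delta_X}[-d])=\HH^{-d}(X)=0$ for $d\geq 1$, this unit factors through the first summand, where it lives in $\HH^0(X)=\bbC$; the triangle identities force it to be nonzero, so after rescaling it is the inclusion of $\cO_{\Delta_X}$. Hence the cone $\cone(\eta)\simeq\cO_{\Delta_X}[-d]$, i.e. the cotwist is the shift $[-d]$, in particular an equivalence. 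This is the structural input that makes $F$ spherical.

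For the key step I would convolve the two defining triangles and resolve the resulting nine-term diagram with the octahedral axiom, reducing $\cS\ast\cS'$ to a cone assembled from $\cP\ast\cP_R\ast\cP\ast\cP_R$ (up to shift) and lower terms. Substituting the splitting $\cP_R\ast\cP\simeq\cO_{\Delta_X}\oplus\cO_{\Delta_X}[-d]$ from condition \eqref{cond-2} into the middle convolution produces an explicit matrix of maps whose entries are governed by the triangle identities $(\epsilon\ast\cP)\circ(\cP\ast\eta)=\id_{\cP}$ and $(\cP_R\ast\epsilon)\circ(\eta\ast\cP_R)=\id_{\cP_R}$. The heart of the argument, and the main obstacle, is to check that in this matrix the diagonal comparison map is an isomorphism while the off-diagonal connecting maps cancel: this is exactly where sphericality enters, and it is the functorial analogue of the cancellation verified for spherical objects in \cite{seidel2001braid}. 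Once this is done the cone collapses to $\cO_{\Delta_Y}$, giving $\cS\ast\cS'\simeq\cO_{\Delta_Y}$, and the computation of $\cS'\ast\cS$ is entirely symmetric using the other pair of zig-zag identities. As a shortcut, one could instead observe that the cotwist is invertible together with the adjoint compatibility $\cP_R\simeq\cP_L[-d]$ of \eqref{cond-1}, and invoke the general criterion for spherical functors of \cite{MR3692883} to conclude directly that $T_F$ is an equivalence.
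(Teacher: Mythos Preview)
The paper does not give its own proof of this proposition: it is stated as a citation to the literature (\cite{addington2011new,MR3692883,kuznetsov2015fractional,meachan2016note,rouquier2006categorification}) and no argument is supplied. So there is nothing to compare your proposal \emph{against} in the paper itself.

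That said, your sketch is essentially the standard argument from the cited references, and both the routes you indicate are valid. The computation of the cotwist as $[-d]$ is correct (your use of $\HH^{-d}(X)=0$ to pin down the unit is exactly the point of the split hypothesis), and together with condition~\eqref{cond-1} this gives $R\simeq CL$ for $C=[-d]$ an equivalence; at that point the general criterion of Anno--Logvinenko \cite{MR3692883} (your ``shortcut'') applies immediately and is the cleanest way to conclude. The direct convolution argument you outline first is also in the literature (e.g.\ \cite{addington2011new,meachan2016note}), but be aware that the ``nine-term diagram plus octahedral'' step is precisely where functoriality-of-cones issues lurk: working consistently at the level of Fourier--Mukai kernels, as you are, is what makes this rigorous, and the cancellation you allude to does go through once one tracks the triangle identities carefully. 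Either way, your proposal is in line with the sources the paper cites.
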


The main technical tool which allows us to follow the proof of \cite[Proposition 2.7]{huybrechts2006pobjects} is the following uniqueness result by Anno and Logvinenko.

\begin{prop}\cite[Theorem 3.1]{anno2017on}
\label{prop:Ptwist}
Let $F=\Phi_\cP:\cD(X)\to\cD(Y)$ be a Fourier--Mukai functor and suppose we have a map $f:\cP\ast\cP_R[-2]\to\cP\ast\cP_R$ such that $\eps\circ f=0$. Then all convolutions of the complex
\begin{equation}
\label{eq:Pkernel}
\cP \ast \cP_R[-2] \xrightarrow{f} \cP \ast \cP_R \xrightarrow{\epsilon} \cO_{\Delta_Y}
\end{equation}
are isomorphic. 
\end{prop}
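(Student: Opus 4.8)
The plan is to treat the diagram \eqref{eq:Pkernel} as a three-term complex and to show that its Postnikov convolution is independent, up to isomorphism, of all the choices made in its construction. Concretely, a convolution is built as an iterated cone: one first completes $f$ to a triangle $\cP\ast\cP_R[-2]\xrightarrow{f}\cP\ast\cP_R\xrightarrow{\pi}\cone(f)\xrightarrow{k}\cP\ast\cP_R[-1]$, and the hypothesis $\eps\circ f=0$ guarantees that the counit factors as $\eps=\psi\circ\pi$ for some lift $\psi:\cone(f)\to\cO_{\Delta_Y}$; the convolution is then $T:=\cone(\psi)$. Two kinds of choices enter: the cones $\cone(f)$ and $\cone(\psi)$ (each unique only up to non-canonical isomorphism), and the lift $\psi$ itself. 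The first kind is harmless, since a morphism of the defining triangles induces an isomorphism on cones; so the entire content is to control the dependence on $\psi$.

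First I would pin down the ambiguity in $\psi$. Any two lifts $\psi,\psi'$ of $\eps$ through $\pi$ satisfy $\psi'-\psi=\delta\circ k$ for some $\delta\in\Hom(\cP\ast\cP_R[-1],\cO_{\Delta_Y})\cong\Hom(\cP\ast\cP_R,\cO_{\Delta_Y}[1])$, so the set of lifts is a torsor under the image of this group under precomposition with the connecting map $k$. The cleanest sufficient condition is therefore the vanishing $\Hom(\cP\ast\cP_R,\cO_{\Delta_Y}[1])=0$, which forces $\psi=\psi'$ and leaves only the harmless cone ambiguity; I would check this first, as it may hold in the cases of interest. In general the group need not vanish, and then one must show directly that different lifts produce isomorphic convolutions.

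To do this I would rewrite $T$ via the octahedral axiom applied to the composable pair $\cP\ast\cP_R\xrightarrow{\pi}\cone(f)\xrightarrow{\psi}\cO_{\Delta_Y}$, whose composite is $\eps$. Since $\cone(\pi)\simeq\cP\ast\cP_R[-1]$, this yields a triangle $\cP\ast\cP_R[-1]\xrightarrow{m_\psi}\cone(\eps)\to T\to\cP\ast\cP_R$, exhibiting $T\cong\cone(m_\psi)$, where crucially $\cone(\eps)$ depends only on $\eps$ and not on $\psi$. The octahedral compatibilities force the composite of $m_\psi$ with the natural map $\cone(\eps)\to\cP\ast\cP_R[1]$ to be a fixed morphism determined by $f$ alone; hence the $m_\psi$ all agree after this projection, and $m_{\psi'}-m_\psi$ lands in the image of $\Hom(\cP\ast\cP_R[-1],\cO_{\Delta_Y})$. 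The remaining goal is to promote this to an isomorphism $\cone(m_\psi)\cong\cone(m_{\psi'})$ by producing automorphisms of $\cP\ast\cP_R[-1]$ and of $\cone(\eps)$ intertwining $m_\psi$ and $m_{\psi'}$.

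The hard part is exactly this last step: a nonzero difference of lifts need not be absorbable into automorphisms by formal triangulated nonsense alone, and this is where the specific nature of $\eps$ as the counit of the adjunction $F\dashv R$ must be used — for instance through the triangular identities relating $\eps$ and the unit $\eta:\cO_{\Delta_X}\to\cP_R\ast\cP$, which constrain how $\delta\circ k$ interacts with $\cone(\eps)$. A robust alternative, which I would pursue in parallel, is to leave the triangulated category and work in a DG (or stable $\infty$-) enhancement of $\cD(Y\times Y)$: there a choice of nullhomotopy for $\eps\circ f$ upgrades \eqref{eq:Pkernel} to an honest twisted complex whose totalisation is a canonically defined object, reducing the problem to showing the totalisation is independent of the chosen nullhomotopy, which can be handled by a contractibility argument for the space of such homotopies. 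Either route localises the whole difficulty in the single group $\Hom(\cP\ast\cP_R,\cO_{\Delta_Y}[1])$ and in the counit structure that controls it.
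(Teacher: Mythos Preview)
The paper does not supply its own proof of this proposition: it is quoted verbatim as \cite[Theorem 3.1]{anno2017on} and used as a black box, so there is no argument in the paper to compare against.

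Your outline correctly isolates the structure of the problem --- the only genuine ambiguity is in the lift $\psi$ of $\epsilon$ through $\pi$, governed by $\Hom(\cP\ast\cP_R,\cO_{\Delta_Y}[1])$ --- and you are right that the counit property of $\epsilon$ is what ultimately makes the argument work. But as written there is a real gap: you name the hard step (absorbing $m_{\psi'}-m_\psi$ into automorphisms) and then stop, saying only that the triangular identities ``constrain how $\delta\circ k$ interacts with $\cone(\epsilon)$'', without exhibiting the required automorphism. Likewise the DG alternative is left at ``can be handled by a contractibility argument'' with no indication of why the space of nullhomotopies is contractible (indeed, it need not be: $\Hom(\cP\ast\cP_R,\cO_{\Delta_Y}[1])$ is typically nonzero, so different homotopies exist, and one must still argue that the resulting totalisations are isomorphic).

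The missing idea, which is what Anno--Logvinenko actually use, is that the adjunction gives a canonical identification $\Hom(\cP\ast\cP_R,\cO_{\Delta_Y}[1])\cong\Hom(\cP_R,\cP_R[1])$ under which every $\delta$ is of the form $\epsilon[1]\circ(\cP\ast g)$ for some $g:\cP_R\to\cP_R[1]$. This factorisation through $\epsilon$ is exactly what lets one build an explicit automorphism of $\cone(f)$ (of the form $\id+(\text{nilpotent})$, using $\cP\ast g\ast\cP$) that carries $\psi$ to $\psi'$. Without this concrete use of the counit your octahedral rewriting only reorganises the problem; it does not solve it.
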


In order to define the $\bbP$-twist associated to a $\bbP$-functor $F$, one needs to fix a choice of morphism $f \in \Hom(\cP\ast\cP_R[-2],\cP \ast \cP_R)$. Different authors make slightly different choices (compare \cite[\S4.3]{addington2011new} and \cite[\S6.2]{cautis2012flops}). To obtain the most direct generalisation of \cite[Proposition 2.7]{huybrechts2006pobjects}, we follow Cautis \cite[\S 6.2]{cautis2012flops}.

\begin{thm}
\label{th:main2}
Let $F=\Phi_{\cP}:\cD(X) \to \cD(Y)$ be a $\bbP^n$-functor, and $j:Y \hookrightarrow \cY$ a one-parameter deformation of $Y$ with Kodaira--Spencer class $\kappa(\cY) \in \H^1(Y,\cT_Y)$. Assume furthermore that $\HH^{2n+2}(X)=0$, and for every $u \in \HH^2(X)$, the obstruction class
\begin{equation}
\ob_{\cP}(u,I_Y(\kappa(\cY)) \neq 0.
\end{equation}
Setting 
\begin{equation}
f:=I_Y(\kappa(\cY)) \ast \cP \ast \cP_R - \cP \ast \cP_R \ast I_Y(\kappa(\cY)),
\end{equation}
the convolution $\cQ \in \cD(Y \times Y)$ of \eqref{eq:Pkernel} is unique and gives rise to an autoequivalence $P_{F}=\Phi_\cQ:\cD(Y)\to\cD(Y)$, defined up to natural isomorphism, which we call the $\bbP$-twist. Moreover, there is a 2-commutative diagram: 
\begin{equation}
\label{eq:2com}
\begin{tikzcd}
\cD(Y)\ar[r,"j_*"]\ar[d,"P_F"'] & \cD(\cY)\ar[d,"T_{j_*F}"]\\ 
\cD(Y)\ar[r,"j_*"] & \cD(\cY).
\end{tikzcd}
\end{equation}
\end{thm}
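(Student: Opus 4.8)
The plan is to establish the two assertions separately: first that the prescribed map $f$ yields a well-defined autoequivalence $P_F$, and then that $P_F$ intertwines with $T_{j_*F}$ via $j_*$. Throughout I write $v:=I_Y(\kappa(\cY))\in\HH^2(Y)$ and let $J\in\cD(Y\times\cY)$ denote the Fourier--Mukai kernel of $j_*$, so that $i_*\cP\simeq J\ast\cP$ and, by Theorem \ref{thm:pbecomess}, $(i_*\cP)_R\simeq\cP_R\ast J_R$.

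For the first assertion, by Proposition \ref{prop:Ptwist} it suffices to verify that $\epsilon\circ f=0$, after which all convolutions of \eqref{eq:Pkernel} agree and $\cQ$ is well-defined. I would compute $\epsilon\circ f$ using the naturality of the counit with respect to the two-sided convolution action: one has $\epsilon\circ(v\ast\cP\ast\cP_R)=v\circ\epsilon=\epsilon\circ(\cP\ast\cP_R\ast v)$, since the left and right convolution actions of $v\in\HH^2(Y)$ on the unit $\cO_{\Delta_Y}$ both reduce to $v$ itself under $\cO_{\Delta_Y}\ast\cO_{\Delta_Y}\simeq\cO_{\Delta_Y}$. Hence the two terms cancel and $\epsilon\circ f=0$. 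That $P_F=\Phi_\cQ$ is then an autoequivalence follows from Cautis's construction \cite[\S6.2]{cautis2012flops}: the only further input is that $f$ be cohomologically nondegenerate, and under our hypotheses this is exactly the content of Lemma \ref{prop:notinimage}, whose equivalent condition \eqref{cond-111} guarantees that the relevant graded pieces of $f$ are isomorphisms.

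For the 2-commutative diagram I would pass to Fourier--Mukai kernels and prove the isomorphism $\cS\ast J\simeq J\ast\cQ$ in $\cD(Y\times\cY)$, where $\cS=\cone(\epsilon')$ is the spherical twist kernel of $T_{j_*F}$. The strategy is to realise both sides as convolutions of one and the same three-term complex
\begin{equation}
\cP'\ast\cP_R[-2]\lra\cP'\ast\cP_R\xrightarrow{J\ast\epsilon} J,\qquad \cP':=i_*\cP.
\end{equation}
Since convolution with $J$ is exact, $J\ast\cQ$ is by construction the convolution of this complex with middle map $J\ast f$. On the other side, I would expand $\cP'\ast\cP'_R\ast J=\cP'\ast\cP_R\ast(J_R\ast J)$ and feed in the self-intersection triangle for $j^!j_*$, whose connecting map is cup product with $v$ (the incarnation of \eqref{eq:obs-triangle} for the identity functor, so $\cO_{\Delta_Y}\to J_R\ast J\to\cO_{\Delta_Y}[-1]\xrightarrow{v}\cO_{\Delta_Y}[1]$). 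This rewrites $\cP'\ast\cP'_R\ast J$ as the cone of $(\cP'\ast\cP_R)\ast v$, so that $\cS\ast J=\cone(\cP'\ast\cP'_R\ast J\xrightarrow{\epsilon'\ast J}J)$ becomes the convolution of the same complex with middle map $(\cP'\ast\cP_R)\ast v$ and last map $J\ast\epsilon$; this last identification uses the triangle identities comparing the counit $\epsilon'$ of the composite adjunction with $\epsilon$ and the unit of $(j_*,j^!)$. Finally I match the two middle maps: since $J\ast f=J\ast v\ast\cP\ast\cP_R-J\ast\cP\ast\cP_R\ast v$, the two convolutions agree (up to the irrelevant overall sign of a connecting map) precisely when $J\ast v=0$, and the desired isomorphism then follows from uniqueness of convolutions.

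The main obstacle is the vanishing $J\ast v=0$, i.e. that the Kodaira--Spencer class of the family pushes forward trivially to the total space. This is the geometric heart of the argument and the exact analogue of the key step in \cite[Proposition 2.7]{huybrechts2006pobjects}: because $\kappa(\cY)$ is the image under the connecting map of the section of $\cN_{Y/\cY}$ determined by the family direction, it maps to zero in $\H^1(Y,j^*\cT_{\cY})$, so the obstruction $v_\cF=(1_\cF\otimes\kappa(\cY))\circ\at(\cF)$ to deforming $\cF$ sideways dies after applying $j_*$. Establishing this cleanly, together with the shift and sign bookkeeping in the two convolutions and the adjunction-compatibility identification of $\epsilon'\ast J$ with $J\ast\epsilon$, is where the real work lies; once these are in place the comparison is forced by the definition of $f$ and the uniqueness of convolutions from Proposition \ref{prop:Ptwist}.
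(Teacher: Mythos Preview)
Your first part (uniqueness of $\cQ$ and that $P_F$ is an autoequivalence) is correct and matches the paper's argument exactly: verify $\epsilon\circ f=0$, invoke Proposition~\ref{prop:Ptwist}, and then use Lemma~\ref{prop:notinimage} to check Cautis's nondegeneracy hypothesis.

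For the intertwining diagram your route genuinely differs from the paper's. The paper does not work on $Y\times\cY$; instead it follows Huybrechts--Thomas on the fibred product $\cY\times_C\cY$, and the only new computation is the identity
\[
f=(1_{\cP\ast\cP_R}\otimes\kappa(\cY\times_C\cY))\circ\at(\cP\ast\cP_R),
\]
i.e.\ $f$ is precisely the obstruction class for $\cP\ast\cP_R$ to deform in the family $\cY\times_C\cY\to C$. This is obtained from Proposition~\ref{prop:todalem5.8} after writing $\kappa(\cY\times_C\cY)=p_1^*\kappa(\cY)+p_2^*\kappa(\cY)$. Once this identity is in hand, the Huybrechts--Thomas argument carries over verbatim with $\cE^\vee\boxtimes\cE$ replaced by $\cP\ast\cP_R$ and their Lemma~2.1 replaced by Proposition~\ref{prop:Ptwist}, so the three-term complex one eventually compares still terminates in a diagonal sheaf and Proposition~\ref{prop:Ptwist} applies in its stated form.

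Your approach on $Y\times\cY$ is morally dual and your key vanishing $J\ast v=0$ is actually \emph{correct}: since $J\ast(-)=(\id_Y\times j)_*$ and $v=\ob(\cO_{\Delta_Y})$ is the connecting map of the triangle $\cO_\Delta[1]\to(\id_Y\times j)^*(\id_Y\times j)_*\cO_\Delta\to\cO_\Delta$, one has $(\id_Y\times j)_*(v)=0$ because under adjunction this becomes $v\circ\epsilon=0$, two consecutive maps in a triangle. However there is a real gap in the final step. You appeal to ``uniqueness of convolutions from Proposition~\ref{prop:Ptwist}'' for the three-term complex $\cP'\ast\cP_R[-2]\to\cP'\ast\cP_R\to J$ in $\cD(Y\times\cY)$, but Proposition~\ref{prop:Ptwist} is stated specifically for complexes of the shape $\cP\ast\cP_R[-2]\to\cP\ast\cP_R\xrightarrow{\epsilon}\cO_{\Delta}$ terminating in the identity kernel; your complex terminates in $J$ and the last map is $J\ast\epsilon$, not a counit for an adjunction in $\cD(\cY)$. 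You would need either to generalise Anno--Logvinenko's argument to this setting or to supply a direct Ext-vanishing that forces uniqueness, neither of which is automatic. The paper's choice to work on $\cY\times_C\cY$ sidesteps exactly this issue, at the cost of importing the Huybrechts--Thomas machinery wholesale; your approach, if the uniqueness step can be salvaged, would be more self-contained.
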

\begin{proof}
Uniqueness of the convolution follows from Proposition \ref{prop:Ptwist} since $\epsilon \circ f=0$. By Lemma \ref{prop:notinimage}, the Fourier--Mukai kernel $\cP$ of the $\bbP^n$-functor $F$ together with $I_Y(\kappa(\cY))$ satisfy Cautis' definition of a $\bbP$-functor \cite[p.26]{cautis2012flops}, so Proposition 6.6 in loc.cit.\ ensures that $\Phi_{\cQ}$ defines an autoequivalence (note that his assumption $\HH^1(X)=0$ was only used to obtain uniqueness of the convolution, which we obtained via Proposition \ref{prop:Ptwist}).

To show that \eqref{eq:2com} 2-commutes, we can now follow the proof of \cite[Proposition 2.7]{huybrechts2006pobjects}, keeping in mind that one has to:
\begin{enumerate}
\item replace their use of $\cE^{\vee} \boxtimes \cE$ with $\cP \ast \cP_R$,
\item replace their use of \cite[Lemma 2.1]{huybrechts2006pobjects} by Proposition \ref{prop:Ptwist},
\item replace their use of the equality $\id \boxtimes \overline{h} - \overline{h}^{\vee} \boxtimes \id=(1_{\cE^{\vee} \boxtimes \cE}\otimes\kappa(\cY \times_C \cY)) \circ \at(\cE^{\vee} \boxtimes \cE)$ by the equality $f=(1_{\cP \ast \cP_R}\otimes\kappa(\cY \times_C \cY)) \circ \at(\cP \ast \cP_R)$. 
\end{enumerate}
The last equality follows from the following computation:
\begin{align}
&(1_{\cP \ast \cP_R}\otimes\kappa(\cY \times_C \cY)) \circ \at(\cP \ast \cP_R) \\ 
&= (1_{\cP \ast \cP_R}\otimes(p_2^*\kappa(\cY)+p_1^*\kappa(\cY))) \circ \exp(\at(\cP \ast \cP_R)) \\
&=  I_Y(\kappa(\cY))\ast \cP\ast \cP_R + \cP \ast \cP_R \ast \sigma_*I_Y(\kappa(\cY)) \\
&= I_Y(\kappa(\cY)) \ast \cP \ast \cP_R-\cP \ast \cP_R  \ast I_Y(\kappa(\cY))   \\
&= f
\end{align}
where we used Proposition \ref{prop:todalem5.8} in the third line. 
\end{proof}

\begin{rmk}
A possible alternative approach to Theorem \ref{th:main2} proceeds as follows: if we set $v=I_Y(\kappa(\cY))$ then \cite[Theorem 1.1]{toda2009deformations} shows that the $\bbP$-twist $P_F$ deforms to give a 2-commuting diagram
\begin{equation}
\begin{tikzcd}
\cD(Y) \ar{d}{P_F} \ar{r}{j_*} & \cD(Y,v) \ar{d}{\widetilde{P}_F} \\
\cD(Y) \ar{r}{j_*} & \cD(Y,v)
\end{tikzcd}
\end{equation}
for some equivalence $\widetilde{P}_F$ if and only if $\ob_{\cQ}(v,v)=0$. One hence needs to check this vanishing condition, and then argue that $\widetilde{P}_F$ lifts further to an equivalence $\cD(\cY) \to \cD(\cY)$, which should coincide with $T_{j_*F}$. We have not pursued this approach further.
\end{rmk}

\section{Hilbert schemes of points on a K3 surface}
\label{sec:hilb}
The aim of this section is to show that Theorem \ref{thm:pbecomess} applies to the functor 
\begin{equation}
F=\Phi_{\cP}:\cD(X) \to \cD(M),
\end{equation}
where $X$ is a smooth projective K3 surface, $M=\Xn$ denotes the (smooth projective) Hilbert scheme of $n+1$ points on $X$ where $n>1$, and $\cP \in \Coh(X \times M)$ is the ideal sheaf of the universal closed subscheme of $X \times M$. The first input is the following result by Addington and Markman--Mehrotra.

\begin{thm}\cite{addington2011new,markman2011integral}
\label{th:addington}
The functor $F:\cD(X) \to \cD(M)$ is a $\bbP^{n}$-functor. In particular, there is an isomorphism of kernels
\begin{equation}
\label{eq:RF}
\gamma:\bigoplus_{i=0}^{n} \cO_{\Delta}[-2i]\to \cP_R \ast \cP
\end{equation}
in $\cD(X \times X)$.
\end{thm}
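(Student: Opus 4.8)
The strategy is to verify the three conditions of Definition \ref{def:nickPn} directly, where $\cP = \cI_{\cZ}$ is the universal ideal sheaf and $\cZ \subset X \times M$ the universal subscheme. Condition \eqref{cond-11}, that $\cP_R \simeq \cP_L[-2n]$, comes for free from the triviality of canonical bundles: using the standard adjoint-kernel formulas $\cP_R \simeq \cP^{\vee} \otimes p_X^*\omega_X[\dim X]$ and $\cP_L \simeq \cP^{\vee} \otimes p_M^*\omega_M[\dim M]$, together with $\omega_X \cong \cO_X$ ($X$ a K3) and $\omega_M \cong \cO_M$ ($M=\Xn$ holomorphic symplectic), we obtain
\[
\cP_R \simeq \cP_L \otimes p_X^*\omega_X \otimes p_M^*\omega_M^{\vee}[\dim X - \dim M] \simeq \cP_L[2-(2n+2)] = \cP_L[-2n].
\]

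The heart of the proof is condition \eqref{cond-22}: one must compute the monad kernel $\cP_R \ast \cP \in \cD(X \times X)$ of the adjunction $F \dashv R$ and identify it with $\bigoplus_{i=0}^n \cO_{\Delta}[-2i]$. I would first pin down the cohomology sheaves, aiming to show $\cH^i(\cP_R \ast \cP) \cong \cO_{\Delta}$ for $i = 0,2,\dots,2n$ and $0$ otherwise. To make the geometry tractable I would trade the ideal sheaf for the structure sheaf via the triangle $\cP \to \cO_{X \times M} \to \cO_{\cZ}$: the kernel $\cO_{X \times M}$ contributes only the easily controlled constant functor $E \mapsto \mathrm{R}\Gamma(X,E) \otimes \cO_M$, while the convolutions involving $\cO_{\cZ}$ reduce to the cohomology of incidence loci in $M \times M$, governed by the known calculations of the cohomology of Hilbert schemes of points (Nakajima, Lehn, and others). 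An attractive alternative is to transport $F$ through the Bridgeland--King--Reid--Haiman equivalence $\cD(\Xn) \simeq \cD_{S_{n+1}}(X^{n+1})$ and carry out the computation equivariantly on the smooth product $X^{n+1}$.

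Once the cohomology sheaves are known, the isomorphism $\gamma$ follows from formality. Applying Lemma \ref{lem:formal} with $S = X \times X$ and $\cE = \cP_R \ast \cP$, the only Ext-groups one must check are $\Ext^{i+1}_{X \times X}(\cO_{\Delta},\cO_{\Delta}) = \HH^{i+1}(X)$; since both nonvanishing cohomology sheaves lie in even degrees, the relevant $i$ are forced to be even, hence $i+1$ is odd, and $\HH^{\mathrm{odd}}(X) = 0$ for a K3 surface, so $\cP_R \ast \cP$ splits as the direct sum of its shifted cohomology sheaves. For condition \eqref{cond-33} I would identify the algebra structure on $\cP_R \ast \cP$ induced by the monad multiplication with $\H^*(\bbP^n,\bbC) \otimes \cO_{\Delta} \cong (\bbC[h]/h^{n+1}) \otimes \cO_{\Delta}$, with $h = \gamma_1$ in degree $2$; granting this, the map $\tilde\gamma_1$ is degreewise multiplication by $h$, which is an isomorphism $\H^{2k} \to \H^{2k+2}$ precisely for $0 \le k \le n-1$.

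The main obstacle is the monad computation underlying condition \eqref{cond-22}: both determining the cohomology sheaves and, for condition \eqref{cond-33}, the ring structure, require genuine input about the cohomology of $\Xn$ and its incidence correspondences. This is exactly where the two published approaches do the real work --- Addington's direct cohomological computation, and Markman--Mehrotra's deformation-and-monodromy argument, which deforms $F$ to the analogous functor for a general moduli space of sheaves on $X$ (where the $\bbP^n$-structure is more transparent) and exploits the fact that $\bbP^n$-functoriality is preserved along the deformation. The outer reduction sketched above is, by contrast, essentially formal.
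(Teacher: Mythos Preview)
The paper does not give its own proof of this theorem: it is stated with a citation to \cite{addington2011new,markman2011integral} and used as a black box for the subsequent arguments. There is therefore nothing in the paper to compare your proposal against.

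That said, your outline is a fair summary of the actual approaches in the cited sources. The verification of condition~\eqref{cond-11} via trivial canonical bundles is correct and standard. Your formality argument for condition~\eqref{cond-22} is also correct: once one knows the cohomology sheaves of $\cP_R\ast\cP$ are $\cO_\Delta$ concentrated in even degrees, Lemma~\ref{lem:formal} applies since the relevant obstruction groups are $\HH^{\mathrm{odd}}(X)=0$ for a K3 surface. You are equally honest that the genuine content---computing those cohomology sheaves and identifying the monad algebra structure for condition~\eqref{cond-33}---is deferred to the literature, and this is accurate: Addington \cite{addington2011new} carries out a direct Ext computation (essentially the BKR route you allude to), while Markman--Mehrotra \cite{markman2011integral} use a deformation argument. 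Your proposal is therefore not a self-contained proof but a correct reduction to the known hard input, which matches how the paper itself treats the result.
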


The next step is to find a deformation $j:M \hookrightarrow \cM$ of the Hilbert scheme, such that for all $u \in \HH^2(X)$, the obstruction 
\begin{equation}
\ob_{\cP}(u,I_{M}(\kappa(\cM))) \neq 0,
\end{equation}
where $\kappa(\cM)$ is the Kodaira--Spencer class associated to $\cM$. 

\begin{prop}
\label{prop:mm-iso}
The morphism
\begin{equation}
-\ast\cP: \HH^2(M) \to \Ext^2_{X \times M}(\cP,\cP)
\end{equation}
is an isomorphism, and the morphism
\begin{equation}
\cP \ast -: \HH^2(X) \to \Ext^2_{X \times M}(\cP,\cP)
\end{equation}
is injective.
\end{prop}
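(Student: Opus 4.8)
The plan is to handle the two statements separately. The injectivity of $\cP \ast -$ is formal and holds for any split $\bbP^n$-functor, while the isomorphism $- \ast \cP$ is special to the Hilbert scheme and rests on a dimension count together with the deep input of Markman and Markman--Mehrotra.

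First I would dispose of the injectivity of $\cP \ast -\colon \HH^2(X) \to \Ext^2_{X \times M}(\cP,\cP)$, which is immediate from the commutative square \eqref{comsq} already established in the proof of Lemma \ref{prop:notinimage}. Indeed, under the isomorphism $\Hom(\cP,\cP[2]) \xrightarrow{\sim} \Hom(\cO_{\Delta},\cP_R \ast \cP[2])$ of \eqref{eq:vastp}, the map $\cP \ast -$ is identified with the composite of the injection $(\id,0)\colon \HH^2(X) \hookrightarrow \HH^2(X) \oplus \HH^0(X)$ with the isomorphism $\Hom(\cO_{\Delta}[-2],\gamma)$. Being a composite of an injection with an isomorphism, $\cP \ast -$ is injective; no property of $M$ beyond the $\bbP^n$-structure \eqref{eq:RF} is needed here.

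For the isomorphism $- \ast \cP\colon \HH^2(M) \to \Ext^2_{X \times M}(\cP,\cP)$, the first step is to check that source and target have the same finite dimension. On the target, the $\bbP^n$-structure \eqref{eq:RF} yields, exactly as in \eqref{eq:vastp}, an isomorphism $\Ext^2_{X \times M}(\cP,\cP) \cong \HH^2(X) \oplus \HH^0(X)$; since $X$ is a K3 surface we have $\dim \HH^2(X) = 22$ and $\dim \HH^0(X) = 1$, so the target is $23$-dimensional. On the source, the HKR isomorphism \eqref{eq:hkr} gives $\HH^2(M) \cong \H^2(M,\cO_M) \oplus \H^1(M,\cT_M) \oplus \H^0(M,\wedge^2\cT_M)$, and because $M$ is an irreducible holomorphic symplectic manifold of $\mathrm{K3}^{[n+1]}$-type the holomorphic symplectic form identifies $\cT_M \cong \Omega_M$ and $\wedge^2\cT_M \cong \Omega^2_M$; hence the three summands have dimensions $h^{0,2}(M)=1$, $h^{1,1}(M)=21$ and $h^{2,0}(M)=1$, so that $\dim \HH^2(M) = 23$ as well. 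It therefore suffices to prove that $- \ast \cP$ is injective, or equivalently surjective.

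The injectivity of $- \ast \cP$ is the main obstacle, and it is precisely here that the geometry of the Hilbert scheme, rather than formal properties of $\bbP^n$-functors, is essential. My approach would be to express $v \ast \cP$ through the exponential Atiyah class, as in the proof of Theorem \ref{thm:def}, writing $v \ast \cP = p_M^*(\alpha',\beta',\gamma') \circ \exp(\at(\cP))$ for $v = I_M(\alpha',\beta',\gamma')$, so that injectivity becomes the non-degeneracy of this pairing against the Hodge components of $\HH^2(M)$. To establish this I would invoke the results of Markman and Markman--Mehrotra \cite{markman2010integral,markman2011beauville,markman2011integral}, which show that the kernel $\cP$ induces a Hodge isometry between $\H^2(M)$, equipped with its Beauville--Bogomolov--Fujiki form, and the appropriate weight-two piece of the Mukai Hodge structure of $X$; concretely, every commutative, noncommutative, or gerby first-order deformation class of $M$ is detected by $\cP$, so no nonzero $v$ can satisfy $v \ast \cP = 0$. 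I expect the genuine difficulty to lie in matching the purely categorical map $- \ast \cP$ with Markman--Mehrotra's cohomological correspondence, which is the technical content that the phrase ``deep results'' in the section introduction is pointing to.
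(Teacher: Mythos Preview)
Your treatment of the injectivity of $\cP \ast -$ is essentially the paper's: both arguments come down to the $\cO_{\Delta}$ summand in \eqref{eq:RF}, and your appeal to the square \eqref{comsq} is just a repackaging of the faithfulness observation the paper makes in one line.

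For the isomorphism $- \ast \cP$, your route and the paper's diverge. Your dimension count is correct and reduces the problem to injectivity, but the injectivity step is where you stop: invoking a Hodge isometry between $\H^2(M,\bbZ)$ and the Mukai lattice does not, without substantial further work, control the categorical map $- \ast \cP$ on $\HH^2$, and you yourself flag the matching of the two as the ``genuine difficulty''. As written, this is a gap rather than a proof.

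The paper bypasses this entirely. It works with the counit triangle
\[
\cE[1] \to \cP \ast \cP_R \xrightarrow{\epsilon} \cO_{\Delta_M},
\]
applies $\Hom_{M \times M}(-,\cO_{\Delta_M})$, and observes that the resulting map $\HH^2(M) \to \Hom(\cP \ast \cP_R,\cO_{\Delta_M}[2]) \cong \Ext^2(\cP,\cP)$ is an isomorphism provided $\Hom(\cE,\cO_{\Delta_M})$ and $\Hom(\cE,\cO_{\Delta_M}[1])$ both vanish. These vanishings are then read off from the local-to-global spectral sequence
\[
E_2^{p,q}=\Ext^p_M(\cH^{-q}(\Delta_M^*\cE),\cO_M) \Rightarrow \Ext^{p+q}_{M\times M}(\cE,\cO_{\Delta_M}),
\]
using the concrete sheaf-theoretic properties of $\cE$ established by Markman in \cite{markman2011beauville}. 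So the ``deep input'' is not the Hodge isometry at all, but rather the structure of the cone of the counit; no dimension count is needed and the isomorphism drops out directly from the long exact sequence.
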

\begin{proof}
The first part is essentially contained in \cite[\S 7.2]{markman2011integral}. For the convenience of the reader we briefly sketch the argument. Consider the triangle determined by the counit 
\begin{equation}
\label{eq:counit}
\cE[1] \to \cP\ast\cP_R \xrightarrow{\epsilon} \cO_{\Delta_M}
\end{equation}
and apply $\Hom(-,\cO_{\Delta_M})$ to this triangle to obtain a long exact sequence:
\begin{equation}
\label{eq:triangle-2}
\cdots \to 
 \Hom(\cE,\cO_{\Delta_M}) \to \HH^2(M) \to \Hom(\cP\ast\cP_R,\cO_{\Delta_M}[2]) \to \Hom(\cE,\cO_{\Delta_M}[1]) 
\to \cdots
\end{equation}
It turns out that $\cE \in \cD(M \times M)$ is a sheaf, whose properties are described in \cite[Proposition 4.1]{markman2011beauville}. These properties allow one to show that in the spectral sequence 
\begin{equation}
E_2^{p,q}=\Ext^p_{M}(\cH^{-q}(\Delta_M^*\cE),\cO_M) \Rightarrow \Ext^{p+q}_{M \times M}(\cE,\cO_{\Delta_M})
\end{equation}
the terms $E_2^{0,0}=E_2^{0,1}=E_2^{1,0}=0$, and hence  
\begin{equation}
\Hom_{M \times M}(\cE,\cO_{\Delta_M}) =\Hom_{M \times M}(\cE,\cO_{\Delta_M}[1])=0.
\end{equation}
So we see that 
\begin{equation}
\HH^2(M) \to \Hom(\cP\ast\cP_R,\cO_{\Delta_M}[2])\simeq \Hom(\cP_R,\cP_R[2])\simeq \Ext^2_{X \times M}(\cP,\cP)
\end{equation}
in \eqref{eq:triangle-2} is an isomorphism.

The second part of the statement follows from the fact that $F$ is faithful, because of the factor $\cO_{\Delta}$ appearing in the isomorphism $\gamma$.
\end{proof}

In particular, by combining Proposition \ref{prop:mm-iso} with \eqref{eq:corr}, there is an induced injective morphism on the second Hochschild cohomology groups:
\begin{equation}
\label{eq:map-hh}
\phi:(-\ast \cP)^{-1} \circ (\cP \ast -):\HH^2(X) \hookrightarrow \HH^2(M),
\end{equation}
and we see that
\begin{equation}
\label{eq:image}
\bigg( \forall u \in \HH^2(X): \ob_{\cP}(u,I_{M}(\kappa(\cM))) \neq 0 \bigg) \iff I_{M}(\kappa(\cM)) \notin \im(\phi).
\end{equation}
So to determine whether Theorem \ref{thm:pbecomess} applies, we need to find (geometric) deformations of $M$ which do not lie in the image of $\phi$.

\subsection{The image of $\phi$}

A detailed study of the image of $\phi$ was undertaken in \cite[\S 7]{markman2011integral}, from which we extract the results necessary in order to apply Theorem \ref{thm:pbecomess}.

Recall that the Yoneda composition defines the structure of a graded ring on the Hochschild cohomology $\HH^{\bullet}(M)$, and that Hochschild  homology 
\begin{equation}
\HH_{\bullet}(M):=\Hom_{M \times M}(\Delta_{!}\cO_{M}[\bullet],\cO_{\Delta})
\end{equation}
is a graded module over $\HH^{\bullet}(M)$, again via Yoneda composition. In particular, for every $\xi\in\HH^i(M)$ we have an associated action map:
\begin{equation}
\label{eq:mxi}
m_{M}(\xi):\HH_j(M)\to\HH_{j-i}(M)\;;\, \zeta\mapsto\xi\circ\zeta.
\end{equation}
Using \eqref{eq:mxi}, we can define the annihilator of a subset $\Sigma\subset\HH_0(M)$:
\begin{equation}
\ann(\Sigma)=\{\xi\in\HH^2(M)\mid m_{M}(\xi)(\zeta)=\xi\circ\zeta=0\text{ for all }\zeta\in \Sigma\}.
\end{equation}
Remember there is a Chern character map 
\begin{equation}
\ch:K_0(M) \to \HH_0(M),
\end{equation}
which coincides with the usual Chern character after composing with the HKR-isomorphism \cite[Theorem 4.5]{caldararu2005mukai2}.
Now let $m\in M$ be a closed point and define the following classes:
\begin{align}
\alpha&=\ch(FR(\cO_m)), \\ 
\beta&=\ch(\cO_m)
\end{align}
in $\HH_0(M)$. We will sometimes identify $\alpha$ and $\beta$ with their images in $\bigoplus_{i\geq 0} \H^i(M,\Omega^i_M)$ under the HKR-isomorphism, so we can speak about their rank and first Chern class.

\begin{prop}
\label{cor:comm-defs}
If $c_1(\alpha) \neq 0$ then there exists $
\kappa \in\H^1(M,\cT)$ such that $I_{M}(\kappa)\notin\im(\phi)$. 
\end{prop}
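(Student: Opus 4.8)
The plan is to sandwich the image of $\phi$ inside a proper linear subspace of $\HH^2(M)$ and then exhibit a genuine deformation class of $M$ lying outside it. The natural candidate for the ambient subspace is the annihilator $\ann(\alpha-\beta)$, taken with respect to the module action \eqref{eq:mxi} of $\HH^{\bullet}(M)$ on $\HH_{\bullet}(M)$. Thus the two things to prove are: (i) $\im(\phi)\subseteq\ann(\alpha-\beta)$, and (ii) that the hypothesis $c_1(\alpha)\neq0$ forces some class $\kappa\in\H^1(M,\cT)$ to avoid this annihilator. Granting these, the desired $\kappa$ satisfies $I_M(\kappa)\notin\ann(\alpha-\beta)\supseteq\im(\phi)$, and by \eqref{eq:image} this is exactly the condition under which Theorem \ref{thm:pbecomess} applies.

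For (i), the idea is to feed membership in $\im(\phi)$ into the deformation-theoretic machinery of Section \ref{sec:hhfm}. Suppose $I_M(\kappa)=\phi(u)$ for some $u\in\HH^2(X)$, so that, writing $v=I_M(\kappa)$, we have $\ob_{\cP}(u,v)=0$. By Theorem \ref{thm:def} the kernel $\cP$, and hence (by adjunction) also $\cP_R$, deforms over the first-order deformation of $X\times M$ determined by $K(u,v)$; concretely $F$ and $R$ lift to functors $F_{u,v}$ and $R_{v,u}$ between the deformed categories $\cD(X,u)$ and $\cD(M,v)$. Since the $M$-direction of this deformation is the honest geometric class $\kappa$, the skyscraper $\cO_m$ lifts to $\cD(M,v)$, and applying the deformed functors shows that $FR(\cO_m)$ lifts as well. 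By Theorem \ref{thm:todalowen} the obstruction $\chi_{FR(\cO_m)}(v)$ therefore vanishes, and passing to traces (both $\chi$ and the action \eqref{eq:mxi} being computed under HKR by contraction against $v$) yields $v\circ\alpha=0$. As $\beta=\ch(\cO_m)$ is a point class, it lies in the top Hodge component of $\HH_0(M)$ and is killed by all of $\HH^2(M)$, so $\ann(\alpha)=\ann(\alpha-\beta)$ and we conclude $v\in\ann(\alpha-\beta)$. This step, which most closely follows Markman--Mehrotra \cite[\S7]{markman2011integral}, is where I expect the main obstacle to lie: one must verify carefully that the cohomological relation $v\ast\cP=\cP\ast u$ really produces a compatible deformation of the adjoint $R$, and hence of the composite $FR$, and that the generalised (possibly noncommutative or gerby) nature of the deformation $u$ on the $X$-side does not interfere with the geometric lift on the $M$-side.

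For (ii), I would work entirely under the HKR-isomorphism. For $\kappa\in\H^1(M,\cT)$ the action of $I_M(\kappa)$ on $\HH_0(M)=\bigoplus_p\H^p(M,\Omega^p_M)$ is contraction with $\kappa$, raising cohomological degree by one and lowering form degree by one. Writing $\alpha-\beta=\sum_p(\alpha-\beta)_p$, its component $(\alpha-\beta)_1\in\H^1(M,\Omega^1_M)$ is precisely $c_1(\alpha)-c_1(\beta)=c_1(\alpha)$, and the resulting $\H^2(M,\cO_M)$-component of $I_M(\kappa)\circ(\alpha-\beta)$ is the contraction $\kappa\lrcorner c_1(\alpha)$. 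It therefore suffices to produce $\kappa$ with $\kappa\lrcorner c_1(\alpha)\neq0$.

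Here I would use that $M=\Xn$ is an irreducible holomorphic symplectic variety, so $\H^2(M,\cO_M)\cong\bbC$ and the holomorphic symplectic form identifies $\cT_M\cong\Omega^1_M$. Under this identification the contraction pairing
\begin{equation}
\H^1(M,\cT_M)\times\H^1(M,\Omega^1_M)\longrightarrow\H^2(M,\cO_M)\cong\bbC
\end{equation}
becomes a nonzero multiple of the Beauville--Bogomolov--Fujiki form restricted to the $(1,1)$-part of $\H^2(M,\bbC)$, which is nondegenerate. Hence $c_1(\alpha)\neq0$ guarantees a class $\kappa\in\H^1(M,\cT)$ with $\kappa\lrcorner c_1(\alpha)\neq0$, so that $I_M(\kappa)\circ(\alpha-\beta)\neq0$ and therefore $I_M(\kappa)\notin\ann(\alpha-\beta)$. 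By (i) this class cannot lie in $\im(\phi)$, which is exactly what we wanted.
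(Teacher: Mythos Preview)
Your approach is correct in outline but takes a genuinely different route from the paper's proof for step (i). The paper does \emph{not} argue via deformation theory and lifting of $\cO_m$; instead it invokes \cite[Lemma 7.9]{markman2011integral} (itself based on \cite[Proposition 6.1]{addington2014hodge}) to obtain, for every $\lambda\in\HH^2(X)$, a commuting square
\[
m_M(\phi(\lambda))\circ F_*R_*=F_*R_*\circ m_M(\phi(\lambda))
\]
on Hochschild homology. From this it follows that $\im(\phi)$ lies in the commutant of $F_*R_*$, and the paper then runs an eigenspace argument: since $(F_*R_*)^2=(n+1)F_*R_*$, the operator $F_*R_*$ has eigenspaces $\im(F_*)$ and $\ker(R_*)$; one checks $\alpha\in\im(F_*)$ and $\alpha-(n+1)\beta\in\ker(R_*)$, and uses invariance of these eigenspaces under $m_M(\xi)$ together with $m_M(\xi)(\beta)=0$ to force $m_M(\xi)(\alpha)=0$ whenever $\xi$ commutes with $F_*R_*$. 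Part (ii), the nondegeneracy of the pairing $\H^1(M,\cT_M)\otimes\H^1(M,\Omega_M)\to\H^2(M,\cO_M)$ on the hyperk\"ahler $M$, is used identically in both arguments.

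Your deformation-theoretic approach is more in the spirit of the paper's own \S\ref{sec:hhfm} and avoids citing the external Markman--Mehrotra/Addington--Thomas lemma, which is appealing. But the two points you flag are genuine and not entirely routine. First, the passage from $\ob_{\cP}(u,v)=0$ to a compatible deformation of $\cP_R$ requires checking that $\ob_{\cP_R}(v,u)=0$; this should follow by dualising $\cP_{u,v}$ and twisting, but one must verify that Toda's sign conventions in \eqref{eq:choicek} are compatible with this operation. Second, the step ``passing to traces yields $v\circ\alpha=0$'' only needs the $\H^2(M,\cO_M)$-component, and for this the identity $\tr(\at(\cE))=c_1(\cE)$ together with linearity of trace gives $\tr(\chi_\cE(I_M(\kappa)))=\kappa\lrcorner c_1(\cE)$ directly, so you do not need full compatibility of HKR with the module structure. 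With these two points tightened, your argument goes through; the paper's route trades this work for an appeal to the cited lemma and a short linear-algebra computation.
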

\begin{proof}
From \cite[Lemma 7.9]{markman2011integral}, which is based on the proof of \cite[Proposition 6.1]{addington2014hodge}, we know that for every $\lambda \in \HH^2(X)$, there is a commuting diagram
\begin{equation}
\begin{tikzcd}
\HH_i(M) \ar{r}{R_*} \ar[swap]{d}{m_{M}(\phi(\lambda))} & \HH_i(X) \ar{r}{F_*} \ar{d}{m_X(\lambda)} & \HH_i(M) \ar{d}{m_M(\phi(\lambda))}\\
HH_{i-2}(M) \ar{r}{R_*} & \HH_{i-2}(X) \ar{r}{F_*} & \HH_{i-2}(M),
\end{tikzcd}
\end{equation}
where $F_*:\HH_{\bullet}(X) \to \HH_{\bullet}(M)$ and $R_*:\HH_{\bullet}(M) \to \HH_{\bullet}(X)$ are the maps induced on Hochschild homology by functoriality. From this diagram we immediately deduce that 
\begin{equation}
\im(\phi) \subset \{\xi \in \HH^2(M) \mid m_{M}(\xi) F_* R_* = F_*  R_*  m_{M}(\xi)\}.
\end{equation}
So to prove the proposition, it suffices to exhibit a class $\kappa \in \H^1(M,\cT)$ such that 
\begin{equation}
\label{eq:noncommute}
m_{M}(I_M(\kappa)) F_* R_* \neq F_*  R_*  m_{M}(I_M(\kappa)).
\end{equation}

As is the case for any hyperk\"ahler variety, we have a perfect pairing: 
\begin{equation}
\H^1(M,\cT)\otimes \H^1(M,\Omega)\to\H^2(M,\cO)\simeq\bbC,
\end{equation}
given by the cup-product. By assumption we have $c_1(\alpha)\neq0$ and so there exists some $0\neq\kappa \in\H^1(M,\cT)$ such that $\kappa \cdot c_1(\alpha) \neq 0$. We claim that any such $\kappa$ satisfies \eqref{eq:noncommute}.

Indeed, by \eqref{eq:RF}, $R_*F_*$ is multiplication by $n+1$, and so  
\begin{equation}
(F_*R_*)^2=(n+1)F_*R_*.
\end{equation}
Thus, we see that $\im(F_*)$ and $\ker(R_*)$ are the eigenspaces of $F_*R_*$ with eigenvalues $n+1$ and $0$, respectively. If we assume that $\xi:=I_M(\kappa)$ does not satisfy \eqref{eq:noncommute}, then $\im(F_*)$ and $\ker(R_*)$ are hence invariant with respect to the action of $m_M(\xi)$. In particular, since $\rk(\beta)=0$, we find that 
\begin{equation}
\label{eq:point}
m_M(\xi)(\alpha -(n+1)\beta)=m_M(\xi)(\alpha).
\end{equation} 
Now $\alpha-(n+1)\beta \in \ker(R_*)$, since
\begin{align}
R_*(\alpha-(n+1)\beta) &=R_*(\ch(FR(\cO_m)))-(n+1)R_*(\ch(\cO_m)) \\
&=R_*F_*(\ch(R(\cO_m)))-(n+1)\ch(R(\cO_m))\\
&=(n+1)(\ch(R(\cO_m)))-(n+1)\ch(R(\cO_m))\\
&=0,
\end{align}
and so by invariance, the left-hand side of \eqref{eq:point} also sits in $\ker(R_*)$. Moreover 
\begin{align}
\alpha&=\ch(FR(\cO_m)) \\
&=F_*\ch(R(\cO_m))
\end{align}
sits in $\im(F_*)$, so by invariance again, the same is true for the right-hand side of \eqref{eq:point}. Since $\ker(R_*)$ and $\im(F_*)$ are eigenspaces for disctinct eigenvalues, it suffices to show that $m_M(\xi)(\alpha) \neq 0$, but this is clear since we chose $\kappa\in\H^1(M,\cT)$ such that $\kappa \cdot c_1(\alpha) \neq 0$, and so we have $\xi=I_M(\kappa) \notin \ann(\alpha)$.
\end{proof}

\begin{thm}
\label{thm:hilb}
Up to algebraisation, there exists a one-parameter deformation $j:M \hookrightarrow \cM$ of $M$ such that 
\begin{equation}
j_*F:\cD(X) \to \cD(\cM)
\end{equation}
is spherical.
\end{thm}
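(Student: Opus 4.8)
The plan is to apply Theorem \ref{thm:pbecomess} to the functor $F=\Phi_{\cP}:\cD(X)\to\cD(M)$. By Theorem \ref{th:addington}, $F$ is a $\bbP^n$-functor, so in order to invoke Theorem \ref{thm:pbecomess} we must verify two hypotheses: first, that $\HH^{2n+2}(X)=0$, and second, that there exists a one-parameter deformation $j:M\hookrightarrow\cM$ whose Kodaira--Spencer class $\kappa(\cM)\in\H^1(M,\cT_M)$ satisfies $\ob_{\cP}(u,I_M(\kappa(\cM)))\neq 0$ for every $u\in\HH^2(X)$.

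\emph{The first hypothesis} is immediate: since $X$ is a K3 surface, it has dimension $2$, so $\HH^{\bullet}(X)$ is concentrated in degrees $0,\ldots,4$ and in particular $\HH^{m}(X)=0$ for all $m>4$. As $n>1$ by assumption, we have $2n+2>4$, whence $\HH^{2n+2}(X)=0$ as required.

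\emph{For the second hypothesis}, I would use the reformulation \eqref{eq:image}, which by Proposition \ref{prop:mm-iso} and the definition of $\phi$ in \eqref{eq:map-hh} tells us that the obstruction condition holds precisely when $I_M(\kappa(\cM))\notin\im(\phi)$. By Proposition \ref{cor:comm-defs}, it suffices to produce a geometric deformation class, i.e.\ an element $\kappa\in\H^1(M,\cT_M)$, lying outside $\im(\phi)$, and this is guaranteed as soon as $c_1(\alpha)\neq 0$, where $\alpha=\ch(FR(\cO_m))$ for a closed point $m\in M$. So the crux reduces to the numerical computation that $c_1(\alpha)\neq 0$. This is where I would invoke the explicit description of the kernels: using Theorem \ref{th:addington} and the structure of $\cP$ as the universal ideal sheaf, one computes $\alpha$ via the Chern character of $FR(\cO_m)$ and checks that its degree-two component is nonzero. (Here the work of Markman and Markman--Mehrotra enters, providing the relevant Hodge-theoretic and homological input about the action of $F_*,R_*$ on $\HH_\bullet(M)$.)

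\emph{The main obstacle} is precisely this verification that $c_1(\alpha)\neq 0$, since it is the one genuinely geometric, non-formal input: everything else is either a dimension count or a direct appeal to the machinery already assembled. Once $c_1(\alpha)\neq 0$ is established, Proposition \ref{cor:comm-defs} yields a class $\kappa\in\H^1(M,\cT_M)$ with $I_M(\kappa)\notin\im(\phi)$; this class is a priori only an infinitesimal (first-order) deformation, so the final step is to algebraise it to an honest one-parameter family $j:M\hookrightarrow\cM$, which accounts for the phrase ``up to algebraisation'' in the statement. With such a family in hand, Theorem \ref{thm:pbecomess} immediately yields that $j_*F:\cD(X)\to\cD(\cM)$ is an $\bbS^{2n+1}$-functor, hence spherical, completing the proof.
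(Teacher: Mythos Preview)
Your approach is exactly the paper's: reduce via \eqref{eq:image} and Proposition \ref{cor:comm-defs} to the single claim $c_1(\alpha)\neq 0$, then integrate the resulting $\kappa$ and apply Theorem \ref{thm:pbecomess}. Two points you leave vague are made explicit in the paper and are worth noting. First, the computation of $c_1(\alpha)$: evaluating the counit triangle \eqref{eq:counit} at $\cO_m$ gives $c_1(\alpha)=c_1(FR(\cO_m))=c_1(\cO_m)-c_1(\cE_m)=-c_1(\cE_m)$, and Markman \cite[Lemma 3.5]{markman2011beauville} identifies $c_1(\cE_m)=E/2$ with $E$ the exceptional divisor of the Hilbert--Chow morphism, so $c_1(\alpha)\neq 0$. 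Second, the integration step: the paper invokes the fact that $M$ is hyperk\"ahler and hence has unobstructed deformations \cite{iacono2010algebraic}, so any $\kappa\in\H^1(M,\cT_M)$ extends to at least a formal one-parameter family, which is precisely why the statement carries the caveat ``up to algebraisation''.
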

\begin{proof}
It suffices to show that there exists some $\kappa \in\H^1(M,\cT)$ such that $I_{M}(\kappa)\notin\im(\phi)$. Indeed, since $M$ is hyperk\"ahler, it has unobstructed deformations \cite{iacono2010algebraic}, so every such $\kappa$ integrates to at least a formal deformation of $M$. We denote the corresponding one-parameter deformation by $\cM$. The theorem then follows by applying Theorem \ref{thm:pbecomess}, using Theorem \ref{th:addington}, \eqref{eq:image} and $\cM$ as input.

By Proposition \ref{cor:comm-defs}, to obtain such a $\kappa$, it suffices to show that $c_1(\alpha) \neq 0$. Observe that $c_1(\alpha)=-c_1(\cE_m)$ where $\cE$ is the sheaf appearing in \eqref{eq:counit}. Indeed, evaluating the triangle in \eqref{eq:counit} on a skyscraper sheaf $\cO_m$ and taking the first Chern class gives $c_1(\alpha)=c_1(FR(\cO_m))=c_1(\cO_m)-c_1(\cE_m)=-c_1(\cE_m)$. Now, by the comment immediately after \cite[Lemma 3.5]{markman2011beauville}, we have $c_1(\cE_m)=E/2 \neq 0$, where $E$ denotes the exceptional divisor coming from the blowup description of $M$.
\end{proof}

\begin{rmk}
Observe that the spherical twist $T_{j_*F} \in \Aut(\cD(\cM))$ cannot be induced by a spherical object on $\cM$. Indeed, the twist associated to a spherical object $\cE\in\cD(\cM)$ acts by $[-2n-1]$ on $\cE$ and by the identity on $\cE^\perp$ whereas $T_{j_*F}$ acts by $[-2n]$ on $\im(j_*F)$ and fixes everything in $\ker(Rj^!)$; the latter action follows from the isomorphism $T_{j_*F} \circ j_*F\simeq j_*F \circ [-2n]$ (see \cite[\S2.3]{addington2011new} or \cite[Lemma 1.4]{meachan2016note}) and the triangle defining the spherical twist $T_{j_*F}$. Since these actions cannot coincide, we have constructed a new autoequivalence of $\cD(\cM)$. See \cite[p.252]{addington2011new} and \cite[\S5]{krug2013new} for similar arguments.
\end{rmk}

\section{Some remarks}
\label{sec:con}

\subsection{Non-split $\bbP$-functors and noncommutative deformations}
In this paper, we have chosen a geometric approach based on Fourier--Mukai kernels, and only used split $\bbP^n$-functors. Recently, Anno and Logvinenko \cite{pfunctor} introduced the notion of a non-split $\bbP^n$-functor between enhanced triangulated categories. Using their theory, it should be possible to prove an abstract version (i.e. not assuming we are in an algebro-geometric setting) of Theorem \ref{thm:pbecomess}, while at the same time removing the assumption that the deformation of $Y$ is geometric. 

Indeed, all the ingredients we used in Section \ref{sec:hhfm} have analogues for DG-categories, and the proof of Theorem \ref{thm:pbecomess} is mostly formal. The one ingredient which complicates matters is the obstruction triangle \eqref{eq:obs-triangle}. A fully satisfactory abstract treatment does not seem to be available at the moment; the relevant issues are discussed in some detail in \cite[\S5]{MR3303245}, and are currently under investigation by these authors. 

\subsection{Deformations of Hilbert schemes of points and other moduli spaces}
Hilbert schemes of points for smooth projective surfaces have  a very rich deformation theory, which has been studied by several groups of authors \cite{belmans2019hilbert,belmansderived,MR2102090,MR1354269,hitchin2012deformations,kapustin2001noncommutative}.
Ideally, one would like to use these results to better understand the deformations $\cM$ from Theorem \ref{thm:hilb}. Indeed, since $h^{0,1}(X)=0$, \cite[\S 4]{hitchin2012deformations} applies and there is an exact sequence
\begin{equation}
\begin{tikzcd}
0 \ar{r} & \H^1(X,\cT_X) \ar{r} & \H^1(M,\cT_{M}) \ar{r} & \H^0(X,\wedge^2\cT_X) \ar{r} \ar[bend right,dashed]{l}{\rho} & 0,
\end{tikzcd}
\end{equation}
where the first map is the natural map induced by the relative Hilbert scheme, and $\rho$, which has an explicit geometric description, splits the sequence. This shows that every deformation of $M$ can be obtained from a (potentially noncommutative) deformation of $X$, and it would be interesting to understand how the Kodaira--Spencer classes obtained in the proof of Theorem \ref{thm:hilb} fit in. In a slightly different direction, Markman and Mehrotra do not just study Hilbert schemes, but also more general moduli spaces of stable sheaves on $X$, and most of their results, which we used in \S\ref{sec:hilb}, have analogues in this more general setting, so one could try and apply Theorem \ref{thm:pbecomess} to these moduli spaces as well.

\subsection{Higher order obstructions}
If there does exist a class $u\in \HH^2(X)$ such that $\ob_{\cP}(u,v)=0$, with $v=I_Y(\kappa(\cY))$, then by Theorem \ref{thm:def}, $F$ deforms to a functor 
\begin{equation}
F_{u,v}:\cD(X,u) \to \cD(Y,v),
\end{equation} 
and $j_*F$ is not spherical. In the object case, this situation was considered in \cite[\S 4]{toda2007on}, and it was shown that higher order obstructions can still be used to construct spherical twists on $\cD(\cY)$. Presumably this can also be generalised to the functor setting, but since we don't know of a situation where this applies, we have refrained from doing so in this paper.

\appendix

\section{Atiyah classes, HKR and the characteristic morphism}
\label{app}

Assume $F=\Phi_{\cP}:\cD(X) \to \cD(Y)$ is an arbitrary Fourier--Mukai functor between smooth projective varieties $X$ and $Y$. The goal of this appendix is to review, and slightly reformulate \cite[Lemmas 5.6, 5.7, 5.8]{toda2009deformations}. The reason for doing this carefully is that Toda assumes throughout his paper that $F$ is an equivalence of categories, and explicitly uses at least the fully faithfulness of $F$ to define morphisms $\exp(a)_X^+$ and $\exp(a)_Y^+$ (see the paragraph above \cite[Lemma 5.7]{toda2009deformations}) which are then used further on. However, this is a red herring, as we now explain. 

\begin{lem}\cite[Proposition 5.6]{toda2009deformations}
\label{lem:com}
The following diagrams are 2-commutative:
\begin{equation}
\label{spadesuit}
\begin{aligned}
\xymatrix{
\cD(X) \ar[rr]^-{\Delta_{X*}} \ar@{=}[d] && \cD(X\times X) \ar[d]^-{\cP\ast -} & \cD(Y) \ar[rr]^-{\Delta_{Y*}} \ar@{=}[d] && \cD(Y\times Y) \ar[d]^-{-\ast\cP}\\ 
\cD(X) \ar[rr]_-{p_X^*(-)\otimes\cP} && \cD(X\times Y) & \cD(Y) \ar[rr]_-{\cP\otimes p_Y^*(-)} && \cD(X\times Y),
}
\end{aligned}
\end{equation}
where $p_{X}, p_Y$ denote the projections from $X\times Y$ onto the factors.
\end{lem}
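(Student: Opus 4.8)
The plan is to prove both 2-commutativities by a direct computation of Fourier--Mukai kernels, using only flat base change and the projection formula; since the two diagrams are interchanged by the evident symmetry $X \leftrightarrow Y$, I would treat the left-hand (the $X$) square in detail and then indicate the symmetric modifications for the right-hand one. Concretely, for the left diagram I must produce a natural isomorphism $\cP \ast (\Delta_{X*}\cE) \simeq p_X^*\cE \otimes \cP$ for every $\cE \in \cD(X)$. Unwinding the definition of $\cP \ast -$ from \S\ref{sec:hhfm}, this reads
\begin{equation}
\cP \ast (\Delta_{X*}\cE) = p_{13*}\bigl(p_{12}^*(\Delta_{X*}\cE) \otimes p_{23}^*\cP\bigr),
\end{equation}
where the $p_{ij}$ are the projections from $X \times X \times Y$.

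First I would analyse the term $p_{12}^*(\Delta_{X*}\cE)$. The square
\begin{equation}
\begin{tikzcd}
X \times Y \ar{r}{\delta} \ar{d}{\pi} & X \times X \times Y \ar{d}{p_{12}} \\
X \ar{r}{\Delta_X} & X \times X
\end{tikzcd}
\end{equation}
with $\delta(x,y) = (x,x,y)$ and $\pi = p_X$ is cartesian, and since $p_{12}$ is flat the square is Tor-independent, so flat base change gives $p_{12}^*(\Delta_{X*}\cE) \simeq \delta_*(p_X^*\cE)$. Substituting this and applying the projection formula for the closed immersion $\delta$ then yields
\begin{equation}
\cP \ast (\Delta_{X*}\cE) \simeq p_{13*}\delta_*\bigl(p_X^*\cE \otimes \delta^* p_{23}^*\cP\bigr).
\end{equation}

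The computation now collapses once I record two identities of morphisms: $p_{23} \circ \delta = \id_{X \times Y}$ and $p_{13} \circ \delta = \id_{X \times Y}$. The first gives $\delta^* p_{23}^*\cP \simeq \cP$, and the second gives $p_{13*}\delta_* \simeq \id$, so the right-hand side becomes exactly $p_X^*\cE \otimes \cP$, as desired; naturality in $\cE$ is inherited from the naturality of base change and of the projection formula. For the right-hand diagram I would run the identical argument on $X \times Y \times Y$, replacing $\delta$ by $\delta'(x,y) = (x,y,y)$ and $\pi$ by $p_Y$, and using the analogous identities for the projections out of $X \times Y \times Y$ to obtain $(\Delta_{Y*}\cF) \ast \cP \simeq \cP \otimes p_Y^*\cF$.

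I expect no serious obstacle: the only point requiring care is the legitimacy of flat base change and of the projection formula at the derived level, which is guaranteed because the projections are flat and $\delta, \delta'$ are closed immersions (hence proper). This recovers \cite[Proposition 5.6]{toda2009deformations} with no fully faithfulness hypothesis on $F$, which is precisely the reformulation this appendix is after.
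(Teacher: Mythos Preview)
Your argument is correct and is exactly the standard base-change/projection-formula computation underlying \cite[Proposition~5.6]{toda2009deformations}, to which the paper simply defers. There is nothing to add: the cartesian squares you write down are the right ones, the identities $p_{23}\circ\delta=p_{13}\circ\delta=\id_{X\times Y}$ (and their primed analogues) collapse the computation as you say, and no hypothesis on $F$ beyond $\cP\in\cD(X\times Y)$ is used.
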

\begin{proof}
See the proof of \cite[Proposition 5.6]{toda2009deformations}.
\end{proof}

Recall that the universal Atiyah class of $Y$:
\begin{equation}
\at_Y:\cO_\Delta\to\Omega_\Delta[1]
\end{equation}
was defined in \eqref{alpha}. Consider the composition
\begin{equation}
\begin{tikzcd}[column sep=large]
\cO_{\Delta} \ar{r}{\at_Y} & \Delta_*\Omega_Y[1] \ar{r}{\at_Y \otimes p_Y^*\Omega_Y} & \Delta_*\Omega_Y^{\otimes 2}[2] \ar{r} & \cdots \ar{r} & \Delta_*\Omega_Y^{\otimes i}[i]
\end{tikzcd}
\end{equation}
and compose this with the anti-symmetrisation map $\Omega_Y^{\otimes i} \to \Omega^i_Y$, to obtain a map
\begin{equation}
\at_{Y,i}:\cO_{\Delta} \to \Delta_*\Omega^i_Y[i].
\end{equation}

\begin{dfn}
The exponential universal Atiyah class of $Y$ is the morphism
\begin{equation}
\label{eq:exp}
\exp(\at)_Y=\bigoplus_{i\geq 0}\at_{Y,i}:\cO_{\Delta} \to \bigoplus_{i \geq 0} \Delta_*\Omega_Y^i[i],
\end{equation}
where $\at_{Y,0}:=\id_Y$.
\end{dfn}

For an object $\cE \in \cD(Y)$, we apply a similar construction to the Atiyah class $\at(\cE):\cE \to \cE \otimes \Omega_Y[1]$ to obtain the exponential universal Atiyah class of $\cE$
\begin{equation}
\exp(\at(\cE)):\cE \to \bigoplus_{i \geq 0} \cE \otimes \Omega_Y^i[i].
\end{equation}

Now consider $\cP\in\cD(X\times Y)$: then the isomorphism $\Omega_{X\times Y}\simeq p_X^*\Omega_X\oplus p_Y^*\Omega_Y$ means that in the exponential of the Atiyah class
\begin{equation}
\exp(\at(\cP)):\cP\to\bigoplus_i\cP\otimes\Omega_{X\times Y}^i[i],
\end{equation}
one can take the summands:
\begin{align}
\exp(\at(\cP))_X &:\cP\to\bigoplus_i\cP\otimes p_X^*\Omega_X^i[i],\\
\exp(\at(\cP))_Y &:\cP\to\bigoplus_i\cP\otimes p_Y^*\Omega_Y^i[i].
\end{align}

We want to understand how these partial exponential Atiyah classes are related to the universal Atiyah classes on each component. Let $\sigma: X \times X \to X \times X$ denote the standard involution.

\begin{lem}\cite[Lemma 5.7]{toda2009deformations}
\label{lem:exp}
For $\cP \in \cD(X \times Y)$, there are equalities
\begin{align}
\label{eq:exp1} \exp(\at(\cP))_X&=\cP \ast \sigma_*\exp(\at)_X  \\
\label{eq:exp2} \exp(\at(\cP))_Y&=\exp(\at)_Y \ast \cP 
\end{align}
of morphisms in $\cD(X \times Y)$.
\end{lem}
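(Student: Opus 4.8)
The plan is to derive both identities from the functor-level statement of Lemma~\ref{lem:com}, reducing the exponential claim to its first-order piece and then bootstrapping. The two identities are proved by the same argument, so I would focus on \eqref{eq:exp2} and indicate the modifications for \eqref{eq:exp1} at the end. The involution $\sigma$ enters only because $\cP\ast-$ contracts the \emph{inner} $X$-factor of $X\times X$: to have the $\Omega_X$ produced by $\exp(\at)_X$ survive onto the free $X$-factor of $X\times Y$ (matching $\exp(\at(\cP))_X\colon\cP\to\bigoplus_i\cP\otimes p_X^*\Omega_X^i[i]$) one must first move it there by $\sigma_*$; for $Y$ the conventions already place $\Omega_Y$ on the free factor of $-\ast\cP$, so no involution is needed.

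First I would establish the first-order case, i.e.\ the equality $\at_Y(\cP)=\at_Y\ast\cP$ of morphisms $\cP\to\cP\otimes p_Y^*\Omega_Y[1]$, the target of the right-hand side being identified by Lemma~\ref{lem:com}. For this, apply the triangulated functor $-\ast\cP\colon\cD(Y\times Y)\to\cD(X\times Y)$ to the universal Atiyah triangle induced by \eqref{eq:atiyah} for $Y$, whose connecting map is $\at_Y$. Evaluating the second diagram of Lemma~\ref{lem:com} at $\cO_Y$ and at $\Omega_Y$ gives $\cO_{\Delta_Y}\ast\cP\simeq\cP$ and $(\Delta_{Y*}\Omega_Y)\ast\cP\simeq\cP\otimes p_Y^*\Omega_Y$, so the image triangle has connecting map $\at_Y\ast\cP\colon\cP\to\cP\otimes p_Y^*\Omega_Y[1]$. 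It then remains to match this triangle with the $Y$-partial Atiyah triangle of $\cP$ coming from the summand $p_Y^*\Omega_Y$ of $\Omega_{X\times Y}\simeq p_X^*\Omega_X\oplus p_Y^*\Omega_Y$; granting this, the connecting maps coincide and $\at_Y\ast\cP=\at_Y(\cP)$.

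Next I would bootstrap from first to all orders. The iterated class $\at_{Y,i}$ is the composite of $\at_Y$ with copies of itself tensored by $\Omega_Y$, post-composed with the fixed anti-symmetrisation $\Omega_Y^{\otimes i}\to\Omega_Y^i$, and $\exp(\at)_Y=\bigoplus_i\at_{Y,i}$ by \eqref{eq:exp}. Since $-\ast\cP$ is a functor it preserves composition, and a projection-formula compatibility $(\cG\otimes\pi^*V)\ast\cP\simeq(\cG\ast\cP)\otimes p_Y^*V$, natural in morphisms (here $V$ is a bundle on $Y$ and $\pi$ projects onto the free factor), converts each tensored copy of $\at_Y$ into the corresponding tensored copy of $\at_Y(\cP)$; the anti-symmetrisation, being pulled back from the free factor, passes through unchanged. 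Applying $-\ast\cP$ term by term therefore gives $\exp(\at)_Y\ast\cP=\bigoplus_i\at_{Y,i}(\cP)=\exp(\at(\cP))_Y$, which is \eqref{eq:exp2}.

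The main obstacle is the identification inside the first-order step: one must verify that $-\ast\cP$ carries the first-order neighbourhood $\cO_{Y\times Y}/\cI_{\Delta_Y}^2$ to exactly the first-order $Y$-thickening of $\cP$ governing $\at_Y(\cP)$—equivalently, that convolution with $\cP$ sends the universal jet sequence on $Y\times Y$ to the $Y$-partial jet sequence of $\cP$ on $(X\times Y)\times(X\times Y)$, compatibly with the splitting of $\Omega_{X\times Y}$. This is the delicate point, since $\cO_{Y\times Y}/\cI_{\Delta_Y}^2$ is not in the image of $\Delta_{Y*}$ and so is not handled by Lemma~\ref{lem:com} directly; it requires tracking the triple-product projections and derived pushforwards carefully, and is precisely the reformulation of \cite[Lemma 5.7]{toda2009deformations} one follows once Lemma~\ref{lem:com} is available. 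Everything else is formal from functoriality of $-\ast\cP$ and the projection formula, and \eqref{eq:exp1} follows verbatim with $\cP\ast\sigma_*(-)$ and the first diagram of Lemma~\ref{lem:com} in place of $-\ast\cP$ and the second.
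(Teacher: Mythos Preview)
Your proposal is correct and follows essentially the same approach as the paper: both reduce to Toda's argument, whose only external ingredient is Lemma~\ref{lem:com}. The paper's proof is in fact just a pointer to \cite[Lemma 5.7]{toda2009deformations} together with the observation that, although Toda's \emph{statement} is phrased via the auxiliary morphisms $\exp(\at)_X^+$, $\exp(\at)_Y^+$ (which presuppose fully faithfulness), his \emph{proof} establishes \eqref{eq:exp1} and \eqref{eq:exp2} directly for an arbitrary kernel; your sketch of the first-order identification via the universal Atiyah triangle, followed by the bootstrap through tensored iterates and anti-symmetrisation, is precisely the content of that proof, and you correctly flag the jet-sequence identification as the one nonformal step.
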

\begin{proof}
Even though the statement of \cite[Lemma 5.7]{toda2009deformations} involves the morphisms $\exp(\at)_X^+$ and $\exp(\at)_Y^+$, which are only guaranteed to exist if $F$ is fully faithful, the proof actually shows that the equalities \eqref{eq:exp1} and \eqref{eq:exp2} hold, and works for any kernel. The only external ingredient that is used in Toda's proof is Lemma \ref{lem:com}.
\end{proof}

\begin{prop}\cite[Lemma 5.8]{toda2009deformations}\label{prop:todalem5.8}
The following diagrams commute:
\begin{equation}
\begin{tikzcd}[column sep=large]
\HT^2(X) \ar{d}{p^*_X} \ar{rr}{\sigma_*I_X} && \HH^2(X) \ar{d}{\cP \ast -}\\
\HT^2(X \times Y) \ar{rr}{ - \circ \exp(\at(\cP))} && \Ext^2_{X \times Y}(\cP,\cP) 
\end{tikzcd}
\end{equation}
\begin{equation}
\label{eq:todasquare}
\begin{tikzcd}[column sep=large]
\HT^2(Y) \ar{d}{p^*_Y} \ar{rr}{I_Y} && \HH^2(Y) \ar{d}{-\ast \cP}\\
\HT^2(X \times Y) \ar{rr}{ - \circ \exp(\at(\cP))} && \Ext^2_{X \times Y}(\cP,\cP) 
\end{tikzcd}
\end{equation}
\end{prop}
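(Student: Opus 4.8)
The plan is to prove both squares at once, since they are formally mirror images of one another: the square \eqref{eq:todasquare} treats the $Y$-factor and the convolution $-\ast\cP$, while the first square treats the $X$-factor and $\cP\ast-$, the only genuine difference being the involution $\sigma_*$. I would spell out the argument for \eqref{eq:todasquare} and then indicate the (identical) modifications for the first diagram.

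First I would recall that the HKR isomorphism $I_Y$ is constructed by contracting against the exponential universal Atiyah class $\exp(\at)_Y\colon\cO_\Delta\to\bigoplus_i\Delta_*\Omega_Y^i[i]$ of \eqref{eq:exp}. Concretely, a polyvector field class $w\in\HT^2(Y)$ contracts the forms appearing in $\exp(\at)_Y$ to produce a morphism $\cO_\Delta\to\cO_\Delta[2]$, and this is $I_Y(w)\in\HH^2(Y)$. Starting from the top-right corner of \eqref{eq:todasquare}, applying $-\ast\cP$ to $I_Y(w)$ convolves this contraction with the kernel $\cP$; the first step is to use Lemma \ref{lem:com}, namely the 2-commutativity of the right-hand square of \eqref{spadesuit}, to rewrite the convolution of a diagonally-supported class in terms of the operation $\cP\otimes p_Y^*(-)$ on $\cD(X\times Y)$.

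The bridge between the two paths is Lemma \ref{lem:exp}: equation \eqref{eq:exp2} identifies $\exp(\at)_Y\ast\cP$ with the $Y$-partial exponential Atiyah class $\exp(\at(\cP))_Y$ of the kernel. I would then observe that the pulled-back polyvector field $p_Y^*w$ only pairs with the $p_Y^*\Omega_Y$-summand under the K\"unneth decomposition $\Omega_{X\times Y}\simeq p_X^*\Omega_X\oplus p_Y^*\Omega_Y$, so that contracting $p_Y^*w$ against $\exp(\at(\cP))$ equals contracting it against $\exp(\at(\cP))_Y$. Combining this with \eqref{eq:exp2} gives $p_Y^*w\circ\exp(\at(\cP))=(w\circ\exp(\at)_Y)\ast\cP=I_Y(w)\ast\cP$, which is exactly the commutativity of \eqref{eq:todasquare}. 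For the first diagram one argues in the same way using \eqref{eq:exp1} in place of \eqref{eq:exp2}: the factor $\sigma_*$ in \eqref{eq:exp1} is precisely what forces the top arrow to be $\sigma_*I_X$, and the left-hand square of \eqref{spadesuit} replaces the right-hand one.

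The step I expect to be the main obstacle is making the interaction between the contraction pairing and convolution precise: one must check that contracting $w$ against $\exp(\at)_Y$ and then applying $-\ast\cP$ agrees with contracting $p_Y^*w$ against $\exp(\at)_Y\ast\cP$. Unwinding this, the point is that $-\ast\cP$ intertwines tensoring by $\Omega_Y^i$ with tensoring by $p_Y^*\Omega_Y^i$ and is compatible with the anti-symmetrisation maps $\Omega_Y^{\otimes i}\to\Omega_Y^i$ used in \eqref{eq:exp}; both properties follow from the projection formula together with the K\"unneth splitting of $\Omega_{X\times Y}$, but they have to be tracked carefully in each degree $i$. This is the substance of Toda's proof of \cite[Lemma 5.8]{toda2009deformations}; having isolated the two inputs as Lemmas \ref{lem:com} and \ref{lem:exp}, whose proofs never use fully faithfulness of $F$, the argument applies verbatim to an arbitrary Fourier--Mukai kernel $\cP$.
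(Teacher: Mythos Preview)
Your proposal is correct and follows essentially the same route as the paper's proof: both use C{\u a}ld{\u a}raru's description of $I_Y$ via contraction against $\exp(\at)_Y$, then invoke Lemmas \ref{lem:com} and \ref{lem:exp} to identify $I_Y(\nu)\ast\cP$ with the composition $(1_\cP\otimes p_Y^*\nu)\circ\exp(\at(\cP))_Y$, and finally observe that $p_Y^*\nu$ only sees the $Y$-summand of $\exp(\at(\cP))$. Your discussion of the compatibility between contraction and convolution is more explicit than the paper's, but the argument is the same.
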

\begin{proof}
Let us focus on the second diagram; commutativity of the first one follows from a similar argument. By \cite[Proposition 4.4]{caldararu2005mukai2} (see also \cite[Proposition 5.3]{toda2009deformations} and the surrounding discussion), for $\nu \in \HT^2(Y)$ we have that $I_Y(\nu)$ is the composition:
\begin{equation}
\cO_{\Delta_Y} \xrightarrow{\exp(\at)_Y} \oplus_{i \geq 0}\Delta_{Y*}\Omega_Y^i[i] \xrightarrow{\Delta_{Y*}(\nu)} \cO_{\Delta_Y}[2]
\end{equation}
and so $I_Y(\nu) \ast \cP$ is the composition:
\begin{equation}
\cP \xrightarrow{\exp(\at(\cP))_Y} \oplus_{i \geq 0}\cP \otimes p_Y^*\Omega^i_{Y}[i] \xrightarrow{1_\cP \otimes p_Y^*(\nu)} \cP[2],
\end{equation}
where we used Lemma \ref{lem:exp} for the first morphism and Lemma \ref{lem:com} for the second. But this composition is equal to 
\begin{equation}
\cP \xrightarrow{\exp(\at(\cP))} \oplus_{i \geq 0}\cP \otimes \Omega^i_{X \times Y}[i] \xrightarrow{1_\cP \otimes p_Y^*(\nu)} \cP[2],
\end{equation}
and we are done.
\end{proof}

Finally, we show that the characteristic morphism is compatible with the HKR-isomorphism in the following sense.

\begin{prop}
\label{prop:char}
For $\cE \in \cD(Y)$, there is a commuting triangle
\begin{equation}
\begin{tikzcd}[column sep=huge]
\HT^2(Y) \ar{r}{-\circ \exp(\at(\cE))} \ar[swap]{d}{I_Y} & \Hom_Y(\cE,\cE[2]) \\
\HH^2(Y) \ar[swap]{ru}{\chi_{\cE}} &
\end{tikzcd}
\end{equation}
\end{prop}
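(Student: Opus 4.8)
The plan is to deduce Proposition~\ref{prop:char} as the special case $X=\Spec(\bbC)$ of the second diagram in Proposition~\ref{prop:todalem5.8}, exploiting the alternative description of the characteristic morphism recalled in \S\ref{sec:hhfm}: namely that $\chi_{\cE}$ is the degree-two part of the convolution functor $-\ast\cE$ once $\cE$ is regarded as a kernel in $\cD(\Spec(\bbC)\times Y)$.

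First I would set $X=\Spec(\bbC)$ and take the Fourier--Mukai kernel to be $\cP=\cE\in\cD(\Spec(\bbC)\times Y)=\cD(Y)$. Under this identification $\Spec(\bbC)\times Y\cong Y$, so that $\HT^2(\Spec(\bbC)\times Y)=\HT^2(Y)$ and the pullback $p_Y^*$ appearing in Proposition~\ref{prop:todalem5.8} becomes the identity. Moreover, since $\Omega_{\Spec(\bbC)}=0$, the decomposition $\Omega_{\Spec(\bbC)\times Y}\simeq\Omega_Y$ identifies the exponential Atiyah class $\exp(\at(\cP))$ of the kernel with the exponential Atiyah class $\exp(\at(\cE))$ of $\cE$ viewed as an object of $\cD(Y)$. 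Hence the bottom arrow $-\circ\exp(\at(\cP))$ of the second diagram is literally the map $-\circ\exp(\at(\cE))$ of Proposition~\ref{prop:char}.

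Next I would identify the right-hand vertical arrow. For the kernel $\cP=\cE$, the convolution $-\ast\cP$ restricts on Hochschild cohomology to a map $\HH^2(Y)\to\Ext^2_{\Spec(\bbC)\times Y}(\cE,\cE)=\Hom_Y(\cE,\cE[2])$, and by the very definition of the characteristic morphism recalled in \S\ref{sec:hhfm} this coincides with $\chi_{\cE}$. Substituting these three identifications into the commuting square of Proposition~\ref{prop:todalem5.8} collapses it to exactly the triangle asserted in Proposition~\ref{prop:char}, giving $\chi_{\cE}\circ I_Y=-\circ\exp(\at(\cE))$.

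The only genuine point requiring care, and hence the main (though minor) obstacle, is checking that the two descriptions of $\chi_{\cE}$---as evaluation on $\cE$ of a natural transformation $\id\Rightarrow[2]$, versus the degree-two part of $-\ast\cE$---agree, so that the arrow produced by specialising Proposition~\ref{prop:todalem5.8} really is the characteristic morphism of \cite[Proposition 6.10]{MR2183254}. This compatibility is essentially the content of the alternative definition given when $\chi_{\cE}$ was introduced, and unwinding $-\ast\cE$ through the K\"unneth identification $\HH^2(Y)=\Hom_{Y\times Y}(\cO_{\Delta_Y},\cO_{\Delta_Y}[2])$ makes the match transparent; no substantial new computation is needed.
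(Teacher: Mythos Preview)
Your proposal is correct and takes essentially the same approach as the paper: the paper's proof also specialises to $X=\Spec(\bbC)$, invoking Lemmas~\ref{lem:com} and~\ref{lem:exp} in that case and using that $\chi_{\cE}$ is $-\ast\cE$, which is exactly the content of the second square in Proposition~\ref{prop:todalem5.8}. The only cosmetic difference is that the paper unwinds the computation directly rather than citing Proposition~\ref{prop:todalem5.8} as a black box.
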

\begin{proof}
For $\omega \in \HT^2(Y)\cong \Hom(\oplus_{i \geq 0} \Omega_Y^i,\cO_Y[2-i])$, we find
\begin{align}
\chi_{\cE}(I_Y(\omega)) &= \chi_{\cE}(\Delta_*(\omega) \circ \exp(\at)_Y) \\
&= (\Delta_*(\omega) \circ \exp(\at)_Y) \ast \cE \\
&= (\Delta_*(\omega) \ast \cE) \circ (\exp(\at)_Y \ast \cE) \\
&= \omega \circ \exp(\at(\cE))
\end{align} 
where we used C{\u{a}}ld{\u{a}}raru's \cite{caldararu2005mukai2} description of the HKR-isomorphism in the first line and Lemmas \ref{lem:com} and \ref{lem:exp} (for $X=\Spec(\bbC)$) in the last.
\end{proof}

\bibliographystyle{alpha}
\bibliography{ref}
\end{document}